\documentclass[11pt,reqno]{amsart}
\usepackage{amsmath, latexsym, amsfonts, amssymb,amsthm, amscd,epsfig,enumerate, color}
\pagestyle{plain}

\advance\hoffset -.75cm

\oddsidemargin=.4cm
\evensidemargin=.4cm
\textwidth=16.5cm
\textheight=22cm

\usepackage{amsfonts}
\usepackage{latexsym}
\usepackage{amsmath}
\usepackage{amssymb}


\newcommand{\Z}{\mathbb Z}
\newcommand{\N}{\mathbb N}
\newcommand{\Prob}{\mathbb P}
\newcommand{\pr}{\mathbb P}

\newcommand{\ident}{{\mathchoice {\rm 1\mskip-4mu l} {\rm 1\mskip-4mu l}
{\rm 1\mskip-4.5mu l} {\rm 1\mskip-5mu l}}}


\newtheorem{teo}{Theorem}[section]
\newtheorem{lem}[teo]{Lemma}
\newtheorem{cor}[teo]{Corollary}
\newtheorem{rem}[teo]{Remark}
\newtheorem{pro}[teo]{Proposition}

\newtheorem{exmp}[teo]{Example}

\begin{document}

\title[Frog models]
{Local and global survival for nonhomogeneous random walk systems on $\Z$}

\author[D.~Bertacchi]{Daniela Bertacchi}
\address{D.~Bertacchi,  Universit\`a di Milano--Bicocca
Dipartimento di Matematica e Applicazioni,
Via Cozzi 53, 20125 Milano, Italy
}
\email{daniela.bertacchi\@@unimib.it}

\author[F.~P.~Machado]{F\'abio Prates Machado}
\address{F.~P.~Machado, Instituto de Matem\'atica e Estat\'\i stica,
Universitade de S\~ao Paulo, Brasil.}
\email{fmachado\@@ime.usp.br}

\author[F.~Zucca]{Fabio Zucca}
\address{F.~Zucca, Dipartimento di Matematica,
Politecnico di Milano,
Piazza Leonardo da Vinci 32, 20133 Milano, Italy.}
\email{fabio.zucca\@@polimi.it}

\date{}

\begin{abstract}
 We study an interacting random walk system on $\Z$ where at time 0 there is an
active particle at 0 and one inactive particle on each site $n\ge1$. Particles
become active when hit by another active particle. Once activated,
the particle starting at $n$ performs an asymmetric, translation invariant, nearest neighbor random walk 
with left jump probability $l_n$. We give conditions for global survival,
local survival and infinite activation both in the case where all particles are
immortal and 
 in the case
where particles have geometrically distributed lifespan (with parameter depending on the starting
location of the particle). More precisely, once activated, the particle at $n$ survives at each step
with probability $p_n \in [0,1]$.
In particular, in the immortal case, we prove a 0-1 law for the probability of local
survival when all particles drift to the right. Besides that, we give sufficient conditions
for local survival or local extinction when all particles drift to the left.
In the mortal case, we provide sufficient conditions for global survival, local
survival and local extinction (which apply to the immortal case with mixed
drifts as well). Analysis of explicit examples is provided: we describe completely the phase diagram
in the cases $1/2- l_n \sim \pm 1/n^\alpha$, $p_n=1$ and $1/2-l_n \sim \pm 1/n^\alpha$, $1-p_n \sim 1/n^\beta$ 
(where $\alpha, \beta>0$).
\end{abstract}

\maketitle

\noindent {\bf Keywords}: inhomogeneous random walks, frog model, egg model, local survival, global survival.

\noindent {\bf AMS subject classification}: 60K35, 60G50.

\baselineskip .6 cm

\section{Introduction}
\label{sec:intro}

We study an interacting random walk system on $\Z$ where at time 0
there is one active particle at 0 and one inactive particle at each vertex
of $\N\setminus \{0\}=\{1, 2, \ldots\}$ (our results apply also if at time $0$
there are empty vertices in $\N\setminus \{0\}$, see Section~\ref{sec:mortal}).
Particles become active if an active
particle jumps to their location.
 The behavior of the system depends on two
sequences $\{l_n\}_{n\ge0}$ and $\{p_n\}_{n\ge0}$ of numbers in $(0,1)$ and $[0,1]$ respectively.
The particle which at time 0 was at $n$,
once activated, has a geometrically distributed lifespan with parameter $1-p_n$
and while alive performs a nearest neighbor random walk with probability $l_n$
of jumping to the left and $1-l_n$ of jumping to the right. If $p_n=1$ we say that
the particle is immortal, otherwise it is mortal.
We are interested in establishing, depending on the parameters, whether the
process \textit{survives globally, locally} and if there is \textit{infinite
activation} or not. Local and global survival have been studied for
 several processes; among these it is worth mentioning the
\textit{Contact Process} and the \textit{Branching Random Walks}
in continuous and discrete time (see for instance
\cite{cf:BZ, cf:BZ2, cf:BZsurvey, cf:MachadoMenshikovPopov, cf:Pem, cf:PemStac1,
cf:Z1}).

To be precise, if $L_0$ is the event that
site 0 is visited infinitely many times,
 we say that there is \textit{local survival} if $L_0$ has positive probability and
\textit{almost sure local survival} if $L_0$ has probability 1. When there is no local
survival, that is, when $L_0$ has probability zero, we also say that there is
\textit{local extinction}.
We say that there is \textit{global survival}
if, with positive probability, at any time there is at least one active particle, and we say
that there is \textit{infinite activation} if, with positive probability,
at arbitrarily large times there are particles which turn from inactive to active.

This process can be seen as a model for information or disease spreading:
every active particle has some information
and it shares that information
with all particles it encounters on its way.
In the last decade, different versions of this model have been studied, often under the name
\textit{frog model} or \textit{egg model}.
In \cite{cf:TW99} the authors prove almost sure local survival for a system of simple random
walkers on $\Z^d$. This result has been extended in \cite{cf:P01} to the case of a random
initial configuration ($d \ge 3$) and in \cite{cf:GS} for random walks on $\Z$
with right drift.
Shape theorems on $\Z^d$ 
can be found in \cite{cf:AMP02b,cf:AMPR01}.
Phase transitions for the model where particles have a $\mathcal G(1-p)$-distributed lifespan,
are investigated in \cite{cf:AMP02b,cf:FMS04,cf:LMP05,cf:P03}.
Recently, in \cite{cf:LMZ10},
global survival of an asymmetric inhomogeneous random walk system on $\Z$
 has been studied (in that model
particles die after $L$ steps without activation).

Here is a sketch of the formal construction of our process. Let $(\Omega, \mathcal{F}, \pr)$
a probability space and $\{\{Z_n^i\}_{n \in \N}\}_{i \in \N}$ a family of independent random walks on
$\N \cup \{D\}$ (where $D \not \in \N$ is an absorbing state that we call \textit{death state})
such that $\{Z_n^i\}_{n \in \N}$ starts from $i$ (that is, $Z^i_0=i$). At each step, if the $i$-th walker is not at $D$,
 it jumps to the left with probability $p_i l_i$,
to the right with probability $p_i(1-l_i)$ and to $D$ with probability $1-p_i$. Once in $D$ the walkers stay there indefinitely
with probability $1$. If $p_i=1$ then the $i$-th walker is immortal, while if $p_i=0$ then the $i$-th walker goes to $D$ immediately
(it is like having no particle at $i$ since it does not participate to the evolution).
Our frog model is a collection of dependent walks $\{\{X_n^i\}_{n \in \N}\}_{i \in \N}$ constructed iteratively as follows.
Let $X_n^0=Z^0_n=0$ for all $n \in N$. Suppose we defined $\{\{X_n^i\}_{n \in \N}\}_{i\le N}$; let us define
$\{X_n^{N+1}\}_n$.
Let $T_{N+1}=\min\{k \colon \exists i \in \{0, \ldots, N\},\, X_k^i=N+1\}$ (where $\min (\emptyset):=+\infty$).
Hence, for all $\omega \in \Omega$,
\[
X_n^{N+1}(\omega):=
\begin{cases}
N+1 & n < T_{N+1}(\omega) \le \infty,\\
Z_{n-T_{N+1}(\omega)}^{N+1}(\omega) &  T_{N+1}(\omega) \le n<\infty.\\
\end{cases}
\]

Here is the outline of the paper and of its main results.
We first deal, in Section~\ref{sec:immortal}, with
the case where all particles are immortal (that is, $p_n=1$ for all $n\ge0$).
It is obvious that in this case there is always
global survival, but infinite activation
is trivial only in the case where at least one particle has $l_n\le 1/2$.
Local survival is nontrivial unless $l_n=1/2$ for some $n \in \N$. In order to understand what
the difficulties one encounters are, think of the case where all particles
drift to the right (we refer to this situation as the \textit{right drift}
case): infinite activation is guaranteed but local survival
is not. 
Theorem~\ref{th:localsurv}(1) states that, in this
case, the probability of local survival obeys a 0--1 law. Roughly speaking
(see Corollary~\ref{cor:condition1}) in the \textit{right drift} case, if $l_n\uparrow\frac12$
sufficiently fast, then we have almost sure local survival, otherwise we have local
extinction. Corollary~\ref{cor:condition1} gives conditions which quantify how fast
the convergence to $\frac12$ should be in order to ensure local survival.
On the other hand, if all particles drift to the left (\textit{left drift}
case), local survival and infinite activation
have the same probability (Theorem~\ref{th:localsurv}(2)).
Proposition~\ref{pro:rumor} and Remark~\ref{rem:Binfty} provide sufficient conditions
for infinite activation (thus also for local survival) in the \textit{left drift} case.
The idea in Remark~\ref{rem:Binfty} is that with positive probability
there is a simple ``chain reaction''
where the initial particle visits a certain site to its right, then the particle there
visits a certain site to its right, and so on.
Example~\ref{exmp:block} shows that
the fact that every possible chain reaction has probability $0$
is not necessary to ensure almost sure finite activation  (thus Proposition~\ref{pro:rumor} is indeed
a stronger result).
Theorem~\ref{pro:blocks} states that if certain subsequences of $\{l_n-1/2\}_{n\ge0}$
(when $l_n<1/2$) and of $\{n(1/2-l_n)\}_{n\ge0}$ (when $l_n>1/2$) stay in some $\ell^p$ space,
then there is local survival.
By Proposition~\ref{pro:RWapproach}, if $\inf_{n \in \N} l_n>1/2$ then there is no infinite
activation (thus no local survival). Examples~\ref{exmp:firstpossibility}
and \ref{exmp:secondpossibility} show that if $\inf_{n \in \N} l_n=1/2$
nothing can \textit{a priori} be said about infinite activation. Examples~\ref{exmp:rightd}
and~\ref{exmp:firstpossibility} together describe completely the phase diagram of the
immortal particle model where $1/2-l_n \sim \pm 1/n^{\alpha}$, $\alpha>0$.

Section~\ref{sec:mortal} is devoted to the case where each particle may be mortal
and has geometrical lifespan with parameter $1-p_n$, $p_n\in[0,1]$.
If $p_n <1$ for all $n$ then any particle disappears almost surely after a finite number of step,
thus global
survival is no longer guaranteed and, even if all particles have right drift, so is
infinite activation. Indeed in this case global survival and infinite activation
have the same probability.
In Subsection~\ref{subsec:geometricglobal} we give 
sufficient conditions for global extinction (Proposition~\ref{excor:geomglobalextinction})
and for global survival (Theorem~\ref{excor:geomglobalsurvival}).
In particular we show that
to survive globally it is necessary that $\limsup_n p_n=1$ and it is sufficient that certain
subsequences of  $\{1-p_n\}_{n\ge0}$ and of $\{l_n-1/2\}_{n\ge0}$ stay in some $\ell^p$ space.
In Subsection~\ref{subsec:geometriclocal}
we deal with the problem of local survival of the process and give some sufficient conditions
on the speed of decay of $\{1-p_n\}_{n\ge0}$ and of $\{l_n-1/2\}_{n\ge0}$
which imply local extinction (Theorem~\ref{pro:geomlocalextinction}) or local survival
(Theorem~\ref{excor:geomlocalsurvival}).
Corollary~\ref{cor:power} shows how our results apply to the case
$1-p_n\sim 1/n^\beta$ ($\beta>0$) and  $1/2-l_n \sim \pm 1/n^\alpha$ ($\alpha>0$)
completely describing the phase diagrams in these cases.

All the proofs are to be found in Section~\ref{sec:Proofs}, while in Section~\ref{sec:final}
we comment on some further questions which could be investigated: one possible generalization
is the study of the process in random environment (see Theorem~\ref{th:localsurvRE}).

\section{Immortal particles}
\label{sec:immortal}

In this section, all particles are immortal, that is, $p_n=1$ for all $n\ge0$.
This assumption guarantees global survival, nevertheless local survival
and infinite activation need additional conditions on the sequence $\{l_n\}_{n\ge0}$.
Clearly, if for some $n\in\N$, $l_n=1/2$ 
then there is local survival and infinite activation
(with positive probability the initial particle reaches $n$ and the
random walk associated to $n$ is recurrent). Therefore in this section we assume that $l_n\neq1/2$ for all $n$.

Let $A_n$ be the event that
the particle at $n$ is activated and ever visits $0$ and $B_n$
the event that the particle at $n$ is activated sooner or later. Clearly $A_n\subseteq B_n$ 
and $\Prob(A_n) >0$ for all $n \in \N$ (since $l_n \in (0,1)$ for all $n \in \N$).
Note that $\{A_n\text{ i.o.}\}\subseteq L_0$ and
$\Prob(L_0 \setminus \{A_n\text{ i.o.}\})=0$. Moreover if there exists $n$ such that $l_n<1/2$ then 
$\Prob(B_m|B_n)=1$ for all $m>n$, thus in this case there is infinite activation.

For any choice of $\{l_n\}_{n\ge0}$, by standard random walk computations,
\[
\Prob(A_n|B_n)
=\begin{cases}
1 & \text{if } l_n>1/2;\\
\left(\frac{l_n}{1-l_n}\right)^n &\text{if }l_n<1/2.
 \end{cases}
\]
Let $B_\infty=\bigcap_{n=1}^\infty B_n$ be the event that
all the particles are
activated sooner or later; $B_\infty$ represents infinite
activation.
Our first goal is to find conditions on the sequence
$\{l_n\}_{n\ge0}$ which
guarantee local survival. The knowledge of the behaviour
of the system with
a fixed sequence helps characterizing many other
sequences. Namely, by
coupling it is not difficult to show that in the
\textit{right drift} case
if we have local survival with  $\{l_n\}_{n\ge0}$, then
there is local
survival with any  $\{l^\prime_n\}_{n\ge0}$ such that
$l^\prime_n\ge l_n$
for all $n$.
Conversely, in the \textit{left drift} case, if we have
local survival with
$\{l_n\}_{n\ge0}$, then there is local
survival with any  $\{l^\prime_n\}_{n\ge0}$ such that
$l^\prime_n\le l_n$
for all $n$.

The following theorem includes the particular case of
\cite[Theorem 2.2]{cf:GS} when the initial condition is one particle per site a.s.~(there $l_n=1-p$ for all $n$).
Theorem~\ref{th:localsurv} characterizes the \textit{right drift} case 
in terms of the sequence $\{l_n\}_{n\ge0}$ and shows that 
in the \textit{left drift} case
the probability of local survival is equal to the probability of
infinite activation. 

\begin{teo}\label{th:localsurv}
\begin{enumerate}
\item Suppose that $l_n<1/2$ for all $n$ 
(\textit{right drift} case).
The probability of local survival obeys a 0-1 law:
\[
\Prob(A_n\text{ i.o.})=
    \begin{cases}
    0 &\text{if }\sum_{n \in \N} \left(\frac{l_n}{1-l_n}\right)^n<+\infty;\\
    1 &\text{otherwise.}
        \end{cases}
\]
\item
Suppose that $l_n>1/2$ for all $n$ (\textit{left drift} case).
Then $\Prob(B_\infty \bigtriangleup (A_n\text{ i.o.}))=0$
(where $\bigtriangleup$ denotes symmetric difference of sets).
\end{enumerate}
\end{teo}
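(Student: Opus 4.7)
The key idea for part~(1) is to pass from the events $A_n$, which are heavily dependent through the activation times $T_n$, to a family of \emph{independent} events to which Borel--Cantelli applies cleanly. In the right drift case ($l_n<1/2$ for all $n$) the identity $\Prob(B_n)=1$ for every $n$ follows by induction from the fact that a right-drifting walk on $\Z$ almost surely visits every site to its right: once particle $n$ is activated, the pathwise construction says that its subsequent trajectory is exactly $Z^n$, which reaches $n+1$ almost surely. Writing $E_n:=\{Z^n\text{ ever visits }0\}$, the identity $A_n=B_n\cap E_n$ is immediate from the construction, and combined with $\Prob(B_n)=1$ it gives $\{A_n\text{ i.o.}\}=\{E_n\text{ i.o.}\}$ modulo null sets. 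A standard gambler's ruin computation gives $\Prob(E_n)=(l_n/(1-l_n))^n$, which is just the conditional probability $\Prob(A_n\mid B_n)$ recorded before the statement. Since the walks $\{Z^n\}$ are independent by construction, so is the family $\{E_n\}$, and Borel--Cantelli~I (in the convergent case) together with Borel--Cantelli~II (in the divergent case, crucially using independence) yield the stated $0$--$1$ dichotomy; the remark that $\Prob(L_0\setminus\{A_n\text{ i.o.}\})=0$ then transfers the conclusion to $\Prob(L_0)$.

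For part~(2), the argument is much softer and is essentially a monotonicity observation. Since the walks are nearest-neighbour, any active particle reaching site $N$ must have previously visited site $N-1$ and thereby activated the particle there; hence $B_N\subseteq B_{N-1}$ for every $N$. It follows that $B_\infty=\bigcap_n B_n$ coincides with $\{B_n\text{ i.o.}\}=\bigcap_n\bigcup_{m\ge n}B_m=\bigcap_n B_n$. Moreover, in the left drift regime the displayed formula before the theorem gives $\Prob(A_n\mid B_n)=1$, so $A_n=B_n$ up to a null set for every $n$. Combining these two facts one obtains $\{A_n\text{ i.o.}\}=B_\infty$ modulo a null set, which is precisely the assertion $\Prob(B_\infty\bigtriangleup\{A_n\text{ i.o.}\})=0$.

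\textbf{Anticipated main difficulty.} Neither part presents a serious analytical obstacle; the only real work is organisational, and it lies in part~(1): one must isolate the single-walk event $E_n$ in such a way that the dependence across the $A_n$'s is replaced by the independence of the $Z^n$'s. The explicit pathwise construction of the frog model given in the introduction makes this isolation transparent, so both the $0$--$1$ law of part~(1) and the set-theoretic equality of part~(2) reduce to classical tools (Borel--Cantelli and monotone limits of decreasing sequences of events).
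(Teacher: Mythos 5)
Your proposal is correct and follows essentially the same route as the paper: in part (1) you reduce $\{A_n\text{ i.o.}\}$ to the independent events $\{Z^n\text{ ever visits }0\}$ via the almost sure activation of every particle (the paper does the same by conditioning on the probability-one event $C_0$ that the initial particle visits every site) and then applies both halves of Borel--Cantelli. In part (2) your identification $A_n=B_n$ up to null sets, combined with the nesting $B_{n+1}\subseteq B_n$, is just a set-theoretic rephrasing of the paper's observation that on $B_\infty$ each particle visits $0$ at least once while local survival forces infinite activation.
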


The following corollary gives some
conditions which are easy to check and that imply convergence or divergence of
the characterizing series of Theorem~\ref{th:localsurv}(1). We denote by $\log(\cdot)$ the 
natural logarithm.

\begin{cor}\label{cor:condition1}
In the \textit{right drift case} ($l_n<1/2$ for all $n$):
\begin{enumerate}
 \item if $\liminf_n n(1/2-l_n)<+\infty$ then $\Prob(A_n \text{ i.o.})=1$;
\item if \[\frac{n(1/2-l_n)}{\log(n)} \le  \frac{1}{4- \log(n)/n} \] for every sufficiently large $n$,  then $\Prob(A_n \ i.o.)=1$.
\item if \[\frac{n(1/2-l_n)}{\log(n)} \ge  \frac{1+\beta \log(\log(n))/\log (n)}{4- 2(\log(n)+ \beta \log(\log(n)))/n} \] 
for some $\beta>1$ and every sufficiently large $n$,  then $\Prob(A_n \ i.o.)=0$. 
\item if there exists $\lambda<4$ such that
 $\sum_n \exp(-\lambda n(1/2-l_n)) < +\infty$
 then  $\Prob(A_n \text{ i.o.})=0$.
\end{enumerate}
\end{cor}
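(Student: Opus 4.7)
The plan is to reduce all four statements to the convergence or divergence of the series $S := \sum_{n\ge 1}\left(l_n/(1-l_n)\right)^n$ via Theorem~\ref{th:localsurv}(1). Setting $\epsilon_n := 1/2 - l_n > 0$ one has
\[
\left(\frac{l_n}{1-l_n}\right)^n = \exp\!\left(-n\log\frac{1+2\epsilon_n}{1-2\epsilon_n}\right),
\]
and I will use the Taylor identity $\log\frac{1+x}{1-x}=2\sum_{k\ge 0}\frac{x^{2k+1}}{2k+1}$, which yields the two--sided bound $2x \le \log\frac{1+x}{1-x} \le \frac{2x}{1-x^2}$ for $x\in(0,1)$. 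Substituting $x=2\epsilon_n$ gives the fundamental sandwich
\[
e^{-4n\epsilon_n/(1-4\epsilon_n^2)} \;\le\; \left(\frac{l_n}{1-l_n}\right)^n \;\le\; e^{-4n\epsilon_n}.
\]

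Parts (1) and (4) follow immediately. For (1), pick a subsequence $(n_k)$ with $n_k\epsilon_{n_k}\le M$; then $\epsilon_{n_k}\le M/n_k\to 0$, so the lower bound gives $(l_{n_k}/(1-l_{n_k}))^{n_k}\ge e^{-5M}$ for $k$ large, and $S=\infty$. For (4), the upper bound gives $\left(l_n/(1-l_n)\right)^n \le e^{-\lambda n\epsilon_n}$ whenever $\lambda\le 4$, so $S$ is dominated by the convergent series $\sum_n e^{-\lambda n\epsilon_n}$.

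For parts (2) and (3), I will first rewrite the hypotheses in the cleaner equivalent forms
\[
4n\epsilon_n \le (1+\epsilon_n)\log n \qquad\text{and}\qquad 4n\epsilon_n \ge (1+2\epsilon_n)\bigl(\log n + \beta\log\log n\bigr),
\]
both of which force $\epsilon_n = O(\log n/n)\to 0$. Part (3) follows from the upper bound of the sandwich: $(l_n/(1-l_n))^n \le e^{-4n\epsilon_n} \le e^{-(\log n+\beta\log\log n)}=1/\bigl(n(\log n)^\beta\bigr)$, whose sum is finite since $\beta>1$. For (2), I will use the refinement $\log\frac{1+2\epsilon_n}{1-2\epsilon_n}\le 4\epsilon_n + C\epsilon_n^3$ (valid for small $\epsilon_n$, by summing the tail of the Taylor series) to obtain
\[
\left(\frac{l_n}{1-l_n}\right)^n \;\ge\; \exp\!\bigl(-(1+\epsilon_n)\log n - Cn\epsilon_n^3\bigr) \;=\; \frac{1}{n}\cdot\exp\!\bigl(-\epsilon_n\log n - Cn\epsilon_n^3\bigr).
\]
Since $\epsilon_n\log n = O((\log n)^2/n)\to 0$ and $n\epsilon_n^3 = O((\log n)^3/n^2)\to 0$, the exponential factor tends to $1$, hence $(l_n/(1-l_n))^n \ge c/n$ for some $c>0$ and all large $n$, and $S$ dominates a constant multiple of the harmonic series, diverging.

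I expect the main obstacle to be the bookkeeping in part (2): the first--order upper bound $(l_n/(1-l_n))^n\le e^{-4n\epsilon_n}$ is not tight enough (it would only give convergence, in the wrong direction), so one must track the next--order Taylor correction $O(\epsilon_n^3)$ and verify that the error terms $\epsilon_n\log n$ and $n\epsilon_n^3$ are negligible under the stated hypothesis. The other three parts are direct consequences of the sandwich inequality and an explicit comparison with a standard series.
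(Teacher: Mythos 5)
Your proof is correct, and it follows the same overall strategy as the paper: everything is reduced, via Theorem~\ref{th:localsurv}(1), to the convergence or divergence of $\sum_n\left(l_n/(1-l_n)\right)^n$, and the four hypotheses are translated into asymptotic bounds on the general term. The organization differs in the details. The paper substitutes $l_n/(1-l_n)=1-a_n$ with $a_n=(1-2l_n)/(1-l_n)$ and funnels parts (1)--(3) through a separate lemma on series of the form $\sum_n(1-a_n)^n$ (Lemma~\ref{lem:sumexp}), whose proof contains exactly your estimates in disguise (the divergence criterion $(1-\log(n)/n)^n\ge c/n$ is your part (2) computation, and $(1-q_n)^n\le e^{-nq_n}$ is your upper sandwich bound). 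Your direct two-sided bound on $\log\frac{1+2\eps_n}{1-2\eps_n}$ buys a genuinely cleaner part (4): since $\log\frac{1+x}{1-x}\ge 2x$ gives $\left(l_n/(1-l_n)\right)^n\le e^{-4n(1/2-l_n)}\le e^{-\lambda n(1/2-l_n)}$ for any $\lambda<4$, you avoid the paper's splitting of the sum according to whether $2/(1-l_n)\le\lambda$, which is needed there because their cruder bound $\exp(-2n(1/2-l_n)/(1-l_n))$ has an exponent constant that only ranges over $(2,4)$. One small inaccuracy worth fixing: you assert that both rewritten hypotheses ``force $\eps_n=O(\log n/n)\to 0$,'' but the hypothesis of part (3) is a \emph{lower} bound on $\eps_n$ and forces no such thing; fortunately your argument for (3) never uses this claim, only the inequality $4n\eps_n\ge\log n+\beta\log\log n$, so the proof stands.
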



The previous result implies that, even if $l_n\uparrow\frac12$, there is no local survival
if $l_n$ converges to 1/2 too slowly.
Moreover, when $\lim_n n (1/2-l_n)/\log(n)$ exists then there is a critical threshold, namely $1/4$,
separating local survival from local extinction. More precisely,
if $\lim_n n (1/2-l_n)/\log(n)<1/4$ then there is local survival (Corollary~\ref{cor:condition1}(2)), 
while if $\lim_n n (1/2-l_n)/\log(n)>1/4$ then
there is extinction (Corollary~\ref{cor:condition1}(3)). 
Finally, if $\lim_n n (1/2-l_n)/\log(n)=1/4^-$ (that is, the sequence converges from below) 
then there is local survival again  (Corollary~\ref{cor:condition1}(2)). As for the behavior
when $\lim_n n (1/2-l_n)/\log(n)=1/4^+$ we can have either local survival or local extinction:
indeed, if the equality in Corollary~\ref{cor:condition1}(2) holds then we have local survival, while
if the equality in Corollary~\ref{cor:condition1}(3) holds we have local extinction.

It is easy to extend Theorem~\ref{th:localsurv} to
the cases where there are both particles
with right drift and particles with left drift, as we note in the following remark,
which allows us to focus only on the two ``pure'' cases where all
particles drift towards the same direction.
\begin{rem}\label{rem:mixed}
If all but a finite number of particles have right drift
then by Theorem~\ref{th:localsurv}(1)
$\sum_{n\colon l_n<1/2} \left(\frac{l_n}{1-l_n}\right)^n<+\infty$ implies local
extinction.
If the series diverges, then $\Prob(A_n\text{ i.o.})=\Prob(B_\infty)=\Prob(B_j)$ 
 where $j=\min\{n\colon l_n<1/2\}$, hence we have local survival, since $\Prob(B_j)>0$
  (to prove that $\Prob(A_n\text{ i.o.}|B_j)=1$ when the series diverges, one has to mimic 
the proof of Theorem~\ref{th:localsurv}(1)).
\\
On the other hand, if there is an infinite number of particles
with left drift and at least one with right drift,
 then again we have local survival, since
$\Prob(A_n\text{ i.o.})=\Prob(B_j)$, where $j=\min\{n\colon l_n<1/2\}$.
\end{rem}
The previous remark and Corollary~\ref{cor:condition1} are useful for the analysis of the following example.

\begin{exmp}\label{exmp:rightd}
 Let $1/2-l_n \sim 1/n^\alpha$ as $n \to \infty$ (where $\alpha >0$).
Since $l_n < 1/2$ for every sufficiently large $n \in \N$, by Remark~\ref{rem:mixed}
it is enough to consider the case $l_n<1/2$ for all $n \in \N$.
It is clear that if $\alpha \ge 1$ then $n(1/2-l_n)/\log(n)  \to 0$ thus, 
by Corollary~\ref{cor:condition1}(2),
there is 
local survival; conversely, if $\alpha \in (0,1)$ then
$n (1/2-l_n)/\log(n) \to +\infty$
hence there
is local extinction according to Corollary~\ref{cor:condition1}(3).
\end{exmp}

%

In the \textit{left drift} case, Theorem~\ref{th:localsurv}(2) tells us that local
survival and infinite activation have the same probability. Thus it is interesting
to find conditions for $\Prob(B_\infty)>0$. The first proposition is the following;
its proof makes use of a coupling between the frog model and the rumor process (see~\cite{cf:JMZ}
for the formal definition).

\begin{pro}\label{pro:rumor}
 Suppose that
that $l_n > 1/2$ for all $n \ge0$. If, for some increasing sequence $\{n_k\}_{k \in \mathbb{N}}$,
\begin{equation}\label{eq:Binfty1}
  \sum_{k=0}^\infty  \prod_{i=0}^{n_k} \left (
1 - \left ( \frac{1-l_i}{l_i}\right )^{n_{k+1}-i}
\right ) < +\infty
\end{equation}
then
$\Prob(B_\infty)>0$.
\end{pro}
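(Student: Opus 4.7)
My plan is to exhibit an event, defined on the underlying independent walks $\{Z^i\}$, that has positive probability and deterministically forces $B_\infty$. For each $k\ge 0$ set
\[
A_k := \bigcup_{i=0}^{n_k}\bigl\{Z^i \text{ ever reaches } n_{k+1}\bigr\}.
\]
By independence of the $Z^i$ and the gambler's-ruin identity $\Prob(Z^i \text{ reaches } j)=((1-l_i)/l_i)^{j-i}$ for $j\ge i$ (valid because $l_i>1/2$),
\[
\Prob(A_k^c)=\prod_{i=0}^{n_k}\left(1-\left(\frac{1-l_i}{l_i}\right)^{n_{k+1}-i}\right),
\]
so the hypothesis is precisely $\sum_k\Prob(A_k^c)<+\infty$.

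The first key step is a purely pathwise inclusion: for every $K\ge 0$,
\[
\bigl\{Z^0\text{ reaches }n_{K+1}\bigr\}\cap\bigcap_{k\ge K+1}A_k \ \subseteq\ B_\infty.
\]
I would prove this by induction on $k$, using only that a nearest-neighbor walk visits every intermediate site on its way from start to end. On the first event, particle $0$ (active from time $0$) activates all particles in $[0,n_{K+1}]$; inductively, if all particles in $[0,n_k]$ are activated and $A_k$ holds, then the witness walker $i^*\le n_k$ provided by $A_k$ is active and its trajectory, which follows $Z^{i^*}$ from $i^*$ to $n_{k+1}$, activates all remaining sites $(n_k,n_{k+1}]$ en route.

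The main obstacle is that the naive union bound $\Prob(\bigcap_k A_k)\ge 1-\sum_k\Prob(A_k^c)$ need not be positive: the hypothesis merely grants a finite sum. I would circumvent this by combining a tail argument with a conditioning trick. By the strong Markov property of $Z^0$ at its hitting time of $n_{K+1}$ together with the independence of $\{Z^i\}_{i\ge 1}$ from $Z^0$, a short computation shows that for every $k\ge K+1$
\[
\Prob(A_k^c\mid Z^0\text{ reaches }n_{K+1})\le\Prob(A_k^c),
\]
because conditioning only makes $Z^0$ more likely to travel further (the factor corresponding to $i=0$ changes from $1-((1-l_0)/l_0)^{n_{k+1}}$ to the smaller $1-((1-l_0)/l_0)^{n_{k+1}-n_{K+1}}$, while the other factors are unaffected by independence). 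Choosing $K$ so that $\sum_{k\ge K+1}\Prob(A_k^c)<1$, the conditional union bound yields
\[
\Prob\!\Bigl(\bigcap_{k\ge K+1}A_k\Bigm|Z^0\text{ reaches }n_{K+1}\Bigr)>0,
\]
and multiplying by the positive quantity $\Prob(Z^0\text{ reaches }n_{K+1})=((1-l_0)/l_0)^{n_{K+1}}$ together with the pathwise inclusion gives $\Prob(B_\infty)>0$.
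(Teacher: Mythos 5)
Your proof is correct, but the probabilistic step is carried out differently from the paper's. Both arguments hinge on the same family of events (``some walker among $0,\dots,n_k$ has maximum right excursion reaching $n_{k+1}$'') and the same pathwise observation that, on the explicit construction, the range of the activated frog $X^i$ coincides with that of the independent walker $Z^i$, so that these events force $B_\infty$. The paper formalizes this as a coupling with the heterogeneous firework (rumor) process of \cite{cf:JMZ}, then applies the FKG inequality to the increasing events $B_{n_k}$ and $\bigcup_{i\le n_k}\{R_i\ge n_{k+1}-i\}$ to obtain the recursive bound $\pr(B_{n_{k+1}})\ge \pr(B_{n_k})\bigl(1-\prod_{i=0}^{n_k}(1-((1-l_i)/l_i)^{n_{k+1}-i})\bigr)$, and finally invokes Lemma~\ref{lem:test0} to convert positivity of the resulting infinite product into convergence of the series. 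You instead pick $K$ so that the tail $\sum_{k\ge K+1}\Prob(A_k^c)<1$, apply a plain union bound under the conditioning on $\{Z^0 \text{ reaches } n_{K+1}\}$, and check via the strong Markov property and independence of the walkers that this conditioning can only decrease each $\Prob(A_k^c)$. Your route is more elementary (no FKG, no infinite-product lemma, no appeal to the firework-process literature), at the price of a cruder explicit lower bound on $\Prob(B_\infty)$ tied to the choice of $K$; the paper's route yields the cleaner product bound $\prod_k\bigl(1-\prod_{i\le n_k}(1-((1-l_i)/l_i)^{n_{k+1}-i})\bigr)$, whose positivity is exactly equivalent to condition~\eqref{eq:Binfty1}, and it exhibits the structural link to rumor processes that the authors reuse elsewhere. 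Both arguments establish the stated result.
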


The main idea in the proof of the previous proposition is to provide a positive lower bound for the
probability that, for each $k$, at least one particle between $0$ and $n_k$ visits site $n_{k+1}$.
These events, indexed by $k$, are not independent since they involve overlapping sets of particles.  

If we consider disjoint sets of particles, we still have dependence of the associated walks 
$\{\{X_n^i\}_{n \in \N}\}_{i \in \N}$, since, to start moving, each particle needs to be activated by another walker.
Nevertheless, disjoint sets $I$ and $J$ have corresponding walks $\{\{Z_n^i\}_{n \in \N}\}_{i \in I}$ and
 $\{\{Z_n^i\}_{n \in \N}\}_{i \in J}$ which are independent by construction. 
This motivates the following strategy, used throughout
the whole paper.
We choose a sequence of non-void, pairwise disjoint sets $\{\mathcal{B}_n\}_{n \in \N}$,
$\mathcal{B}_n \subset \N$ that we call \textit{blocks}. The general idea is to estimate the probability
that particles in different blocks perform specific different tasks.
One way of choosing the sequence $\{\mathcal{B}_n\}_{n \in \N}$ 
is to partition $\N$ into disjoint intervals and to choose, for every $n \in \N$,
 $\mathcal{B}_n$ as a subset of the $n$-th interval (as in the following figure).

\begin{center}
 \includegraphics[width=9truecm]{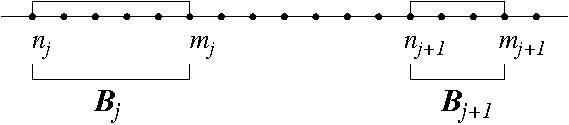}
\\{Figure 1.}
\end{center}

As an example,
consider, for each $j\ge 0$, the event $B^\prime_j:=$``at least
one particle in $\mathcal{B}_j$ visits all the sites of the of $\mathcal{B}_{j+1}$''.
 Using the explicit construction of the process, it is easy to show that $\bigcap_{j \in \N} B^\prime_j$
is the same event if we consider the frog model $\{\{X_n^i\}_{n \in \N}\}_{i \in \N}$ or the process
$\{\{Z_n^i\}_{n \in \N}\}_{i \in \N}$. 
Using the second process, however, allows us to take advantage
of the fact that we are dealing with an intersection of independent events.
Clearly $\bigcap_{j \in \N} B^\prime_j$ is a subset of $B_\infty$, thus if it has positive probability,
then there is infinite activation, which, in the \textit{left drift} case, also means local
survival. 
The probability of $\bigcap_{j \in \N} B^\prime_j$ is strictly positive if and only if
$
  \sum_{j=0}^\infty  \prod_{i \in \mathcal{B}_j} \left (
1 - \left ( \frac{1-l_i}{l_i}\right )^{m_{j+1}-i}
\right ) < +\infty
$
where $m_j:=\max(\mathcal{B}_j)$ 
(the 
proof mimics the one of Proposition~\ref{pro:rumor});
hence the previous inequality implies $\pr(B_\infty)>0$. In this case, the block decomposition
simply gives us 
a corollary of Proposition~\ref{pro:rumor}; in what follows we 
use this method 
to obtain sufficient conditions for local and global survival in the general case.

A special case of block decomposition is given by $\mathcal{B}_j:=\{n_j\}$ for a suitable 
increasing sequence $\{n_j\}_{j \in \N}$ and it is treated in the following remark.

\begin{rem}\label{rem:Binfty}
Suppose that all particles have left drift ($l_n>1/2$ for all $n$).
If there exists an increasing sequence $\{n_j\}_{j \in \N}$ such that 
$\prod_{j\in \N} ((1-l_j)/l_j )^{n_{j+1}-n_j}>0$ or, equivalently, 
\begin{equation}\label{eq:Binfty2}
 \sum_{j=0}^\infty (n_{j+1}-n_{j}) \frac{2 l_{n_{j}}-1}{l_{n_{j}}}<+\infty.
\end{equation}
then $\Prob(B_\infty)>0$. 
To be precise, we are exploiting the fact
that if there exists a subsequence $\{n_j\}_{j\ge0}$ in $\N$, such that
the event ``the $n_j$-th particle visits the $n_{j+1}$-th vertex, for all $j\ge0$''
has positive probability, then also $B_\infty$ has positive probability.
%
%

One may think that we need to test infinitely many sequences looking for the one satisfying equation~\eqref{eq:Binfty2}.
Actually, there is one particular sequence $\{\bar n_j\}_{j \in \N}$ such that 
if either it is finite or 
it is infinite but does not satisfy equation~\eqref{eq:Binfty2},
then there are no sequences satisfying equation~\eqref{eq:Binfty2}.
Define the increasing sequence
$\{\bar n_j\}_{j \in \N}$ as the collection of all points satisfying $h(n)<h(n-1)$, 
where $h(n):=\min\{(2l_k-1)/l_k: k\le n\}$. 
The claim follows from Lemma~\ref{lem:test}(4).
In particular if $\{l_n\}_{n\ge0}$ is 
decreasing
then $\bar n_j=j$ for all $j \in \N$.
%

Finally there might be infinite activation  even if there are no 
sequences  satisfying equation~\eqref{eq:Binfty2}, as Example~\ref{exmp:firstpossibility}
shows.
\end{rem}

For all $k\ge0$, the event ``the $n_k$-th particle visits the $n_{k+1}$-th vertex''
implies the event ``at least one particle between $0$ and $n_k$ visits the $n_{k+1}$-th vertex'', hence
equation~\eqref{eq:Binfty2} implies equation~\eqref{eq:Binfty1}. Nevertheless the condition
given by Remark~\ref{rem:Binfty} is easier to check.

The block argument leads to other nice sufficient conditions for local survival.
The following result is a particular case of Theorem~\ref{excor:geomlocalsurvival}(3) when $p_n=1$ for all $n \in \N$ and it
deals simultaneously with the mixed case.


\begin{teo}\label{pro:blocks}
Suppose that there exists $L \in \N$  and a sequence of pairwise disjoint sets $\{\mathcal{B}_n\}_{n \in \N}$ such that 
$\#\mathcal{B}_n=L$ and $\sup_{n \in \N} (\max(\mathcal{B}_{n+1})-\min(\mathcal{B}_n))<+\infty$. Define
$\mathcal{O}:=\bigcup_{j \in \N} \mathcal{B}_{2j+1}$, $\mathcal{E}:=\bigcup_{j \in \N} \mathcal{B}_{2j}$.
If 
\[
 \sum_{n \in \mathcal{E} \colon  l_n>1/2} (l_n-1/2)^L < +\infty, \quad 
\sum_{n \in \mathcal{O} \colon l_n < 1/2} n^L(1/2-l_n)^L < +\infty
\]
then there is local survival. 
\end{teo}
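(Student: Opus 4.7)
The plan is to apply the block-decomposition strategy introduced in the section, working exclusively with the auxiliary independent walks $\{Z^i\}_{i\in\N}$, so that events supported on disjoint index sets are mutually independent. Set $M:=\sup_{n\in\N}(\max\mathcal{B}_{n+1}-\min\mathcal{B}_n)$, which is finite by hypothesis; by telescoping $\max\mathcal{B}_{n+2}-\min\mathcal{B}_n\le 2M$. We assume throughout that the blocks are strictly ordered ($\max\mathcal{B}_n<\min\mathcal{B}_{n+1}$), which is the configuration arising in applications of the theorem. For every even $n$ let $S_n$ be the event that some $i\in\mathcal{B}_n$ has $Z^i$ visiting $\max\mathcal{B}_{n+2}$, and for every odd $n$ let $R_n$ be the event that some $i\in\mathcal{B}_n$ has $Z^i$ visiting $0$. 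The events $\{S_n\}_{n\in\mathcal{E}}\cup\{R_n\}_{n\in\mathcal{O}}$ involve pairwise disjoint families of $Z$-walks and are therefore mutually independent.

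The first main step is to establish $\sum_{n\in\mathcal{E}}\Prob(S_n^c)+\sum_{n\in\mathcal{O}}\Prob(R_n^c)<+\infty$. For an even $n$, any $i\in\mathcal{B}_n$ with $l_i<1/2$ makes $Z^i$ reach every site to its right a.s., so $\Prob(S_n^c)=0$ unless \emph{every} particle in $\mathcal{B}_n$ has left drift. In that case, a gambler's-ruin estimate gives, for $l_i$ close enough to $1/2$ (a regime attained for all large $n$ by the hypothesis),
\[
\Prob(Z^i\text{ fails to reach }\max\mathcal{B}_{n+2})\le 1-\Bigl(\tfrac{1-l_i}{l_i}\Bigr)^{2M}\le c\,(l_i-1/2),
\]
with $c=c(M)$. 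Independence of the $L$ walks within the block, together with the elementary inequality $\prod_{j=1}^L a_j\le\sum_{j=1}^L a_j^L$ (valid for $a_j\in[0,1]$, hence for $a_j=l_i-1/2$), then yields
\[
\Prob(S_n^c)\le c^L\sum_{i\in\mathcal{B}_n,\,l_i>1/2}(l_i-1/2)^L;
\]
summing over even $n$ using the disjointness of the blocks gives convergence by the first hypothesis. A parallel argument based on the lower bound $(l_i/(1-l_i))^i\ge 1-c'\,i(1/2-l_i)$ for the hitting probability of $0$ by a right-drift $Z^i$ starting at $i$ (again valid for large $n$) produces $\sum_{n\in\mathcal{O}}\Prob(R_n^c)\le (c')^L\sum_{i\in\mathcal{O},\,l_i<1/2}i^L(1/2-l_i)^L<+\infty$.

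Choose $N$ large enough that $\Prob(S_n^c),\Prob(R_n^c)<1$ for every $n\ge N$; by independence and summability, $\Prob\bigl(\bigcap_{n\ge N}(S_n\cap R_n)\bigr)=\prod_{n\ge N}\Prob(S_n)\Prob(R_n)>0$. The event $\mathcal{A}:=\{Z^0\text{ visits }\max\mathcal{B}_N\}$ has positive probability and depends only on the index $0\notin\bigcup_{n\ge N}\mathcal{B}_n$, so it is independent of $\bigcap_{n\ge N}(S_n\cap R_n)$. On $\mathcal{A}\cap\bigcap_{n\ge N}(S_n\cap R_n)$ the initial walk $X^0=Z^0$ activates every particle of $\mathcal{B}_0\cup\cdots\cup\mathcal{B}_N$; then inductively, for each even $n\ge N$, $S_n$ supplies a particle of $\mathcal{B}_n$ whose frog walk (a time-shift of its $Z$-walk) reaches $\max\mathcal{B}_{n+2}$ and activates all of $\mathcal{B}_{n+1}\cup\mathcal{B}_{n+2}$ along the way. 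Hence every block beyond $\mathcal{B}_N$ gets fully activated, each $R_{2j+1}$ produces a distinct particle that visits $0$, and $0$ is visited infinitely often, which is local survival with positive probability.

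The main obstacle is the block-level failure estimate of Step 1: converting the per-particle summability of the hypotheses into the per-block failure bound required for Borel--Cantelli. The key ingredients are the observation that ``mixed'' blocks have zero failure probability (so effectively the sum runs over the ``pure'' blocks), the combinatorial inequality $\prod_{j=1}^L a_j\le\sum_{j=1}^L a_j^L$ that trades products of $L$ deviations against sums of their $L$-th powers, and the linearization of the gambler's-ruin probabilities, which is uniform in the large-$n$ regime as a consequence of the convergences $l_i-1/2\to 0$ and $i(1/2-l_i)\to 0$ implied by the hypotheses.
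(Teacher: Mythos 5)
Your proof is correct and follows essentially the same route as the paper's own argument (which simply adapts the proof of Theorem~\ref{excor:geomlocalsurvival} with $p_n\equiv 1$): independence of the auxiliary walks $Z^i$ across disjoint blocks, even blocks driving the activation chain and odd blocks supplying visitors to the origin, the linearization $1-x^k\le k(1-x)$ of the gambler's-ruin probabilities, the arithmetic--geometric mean step $\prod_{j=1}^L a_j\le\sum_{j=1}^L a_j^L$, and a convergent-product (Lemma~\ref{lem:test0} / Borel--Cantelli) conclusion. The only point worth flagging is your added assumption $\max(\mathcal{B}_n)<\min(\mathcal{B}_{n+1})$, which the theorem's statement leaves implicit but clearly intends, since it is what makes the hypothesis $\sup_{n}(\max(\mathcal{B}_{n+1})-\min(\mathcal{B}_n))<+\infty$ and the block-to-block chain reaction meaningful.
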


In the previous theorem, the idea is to make sure that particles in even labelled blocks $\mathcal{E}$ take
care of the activation process while particles in odd labelled blocks $\mathcal{O}$ take
care of visiting the origin (at least one particle for each block). 

In particular, we have that a sufficient condition for local survival in the left-drift (resp.~right-drift) 
case is the existence of $L, m \in \N \setminus\{0\}$ and of an increasing sequence $\{n_j\}_{j \in \N}$, $n_{j+1}-n_j \le m$
such that
\[
 \sum_{j \ge 0} (l_{n_j}-1/2)^L < +\infty, \quad \Big (\textrm{resp.~}\sum_{j \ge 0} {n_j}^L(1/2-l_{n_j})^L < +\infty \Big).
\]
Indeed, if we take $L_1=Lm$, then every interval 
$[nL_1,(n+1)L_1-1]$ 
contains at least $L$ vertices in  $\{n_j\}_{j \in \N}$. For all $n \in \N$, take exactly $L$ of these vertices in the $n$-th interval 
and  
define $\mathcal{B}_n$ as the set containing those vertices. Theorem~\ref{pro:blocks} yields the conclusion.
Other, more powerful, conditions can be derived from Theorem~\ref{excor:geomlocalsurvival}(2) and (3) by taking $p_n=1$ for all $n \in \N$. 

Here are two examples which make use of Theorem~\ref{pro:blocks}. The first one is the left-drift counterpart
of Example~\ref{exmp:rightd}.

\begin{exmp}\label{exmp:firstpossibility}
 Take $l_n - 1/2 \sim 1/n^\alpha$ as $n \to \infty$
where $\alpha > 0$; 
hence $l_n >1/2$ for every sufficiently large $n \in \N$ and
if $L>1/\alpha$ then
$\sum_n \left ( l_n -1/2 \right )^L <\infty$. Thus,
by Theorem~\ref{pro:blocks}, we have local survival and
$\Prob(B_\infty)>0$ for all $\alpha>0$.
\end{exmp}

The second example shows that the sufficient condition given in Remark~\ref{rem:Binfty}
is not necessary.

\begin{exmp}\label{exmp:block}
 Let $l_0=1/2+\varepsilon$, $\varepsilon \in (0,1/2)$ and $l_n=1/2+1/j^2$ if $j^3 \le n<(j+1)^3$. By Theorem~\ref{pro:blocks}
there is local survival, indeed $\sum_{n \in \N} (l_n-1/2)^L= \varepsilon^L+ \sum_{j \ge 1} (3j^2+3j+1)/j^{2L}$ which converges
for $L$ sufficiently large. On the other hand $\{l_n\}_{n \in \N}$ is nonincreasing, hence by Remark~\ref{rem:Binfty} there exists
a sequence $\{n_j\}_{j \in \N}$ satisfying equation~\eqref{eq:Binfty2} if and only if $\sum_{n \ge 0} (l_n-1/2)/l_n<\infty$: this is
false since 
\[
 \sum_{n \ge 0} \frac{l_n-1/2}{l_n} \ge \sum_{j \ge 1} (3j^2+3j+1) \frac{1/j^2}{1/2+1/j^2} =\infty.
\]
\end{exmp}

So far, in the \textit{left drift} case, we have seen only sufficient conditions for
$\Prob(B_\infty)>0$.
We now give a sufficient condition for
$\Prob(B_\infty)=0$, whose proof makes use of a random walk approach.

\begin{pro}\label{pro:RWapproach}
In the left drift case
 ($l_n>1/2$ for all $n$),
if $\liminf_{n \to  \infty} l_n>1/2$ then $\Prob(B_\infty)=0$ and there is
local extinction.
\end{pro}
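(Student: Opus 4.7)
The plan is to reduce $B_\infty$ to the recurrence at $0$ of a Markov chain built from the independent walks $\{Z^k\}_{k\in\N}$, and then to use the hypothesis $\liminf_n l_n>1/2$ to produce a uniformly negative drift for (a dominating version of) that chain.

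Using the construction in Section~\ref{sec:intro}, set $\tilde M_k := \sup_{n\in\N} Z_n^k$ and $W_k := \tilde M_k - k \ge 0$. By the standard gambler's ruin computation the variables $W_k$ are independent with $\Prob(W_k \ge m)=q_k^m$, where $q_k:=(1-l_k)/l_k\in(0,1)$. Because after activation the trajectory of the $k$-th frog coincides with $Z^k$ (shifted in time), an induction on $n$ gives
\[
B_\infty \;=\; \bigcap_{R\ge 0}\Bigl\{\max_{k\le R}\tilde M_k\ge R+1\Bigr\}.
\]
Writing $Y_R:=\max_{k\le R}\tilde M_k-R$, one easily obtains the recursion $Y_{R+1}=\max(Y_R-1,W_{R+1})$, so $\{Y_R\}_{R\in\N}$ is a Markov chain on $\N$ and $B_\infty=\{Y_R\ge 1\text{ for all }R\ge 0\}$. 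By hypothesis one can pick $q\in(0,1)$ and $N\in\N$ with $q_k\le q$ for every $k\ge N$, so each $W_k$ with $k\ge N$ is stochastically dominated by a variable $\tilde W_k$ of tail $q^m$. Because the map $(a,b)\mapsto\max(a-1,b)$ is monotone in both arguments, a standard coupling then yields a time-homogeneous chain $\{\tilde Y_R\}_{R\ge N}$ with i.i.d.\ innovations distributed like $\tilde W$ such that $\tilde Y_R\ge Y_R$ for every $R\ge N$.

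It remains to verify that $\tilde Y_R$ hits $0$ almost surely, which forces the same for $Y_R$ and hence $\Prob(B_\infty)=0$. A short computation gives
\[
\E\bigl[\tilde Y_{R+1}\mid\tilde Y_R=y\bigr]-y \;=\; -1+\frac{q^y}{1-q},
\]
which is at most $-1/2$ once $y$ exceeds some finite threshold $y_0$. The chain is trivially irreducible (from any state $y$, requiring $y$ consecutive innovations equal to $0$ reaches the state $0$ with positive probability), so Foster's criterion yields positive recurrence of $\tilde Y_R$, and therefore $\tilde Y_R=0$ infinitely often almost surely. The conclusion $\Prob(B_\infty)=0$ follows, and local extinction is then immediate from Theorem~\ref{th:localsurv}(2). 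The main obstacle is identifying the correct Markov chain and justifying the domination; once these are in place, the drift computation and the Foster step are classical.
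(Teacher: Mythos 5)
Your proof is correct, but it follows a genuinely different route from the paper's. The paper organizes the activated particles into \emph{generations} (generation $n+1$ is the set of sites first reached by particles of generation $n$), tracks the Markov chain $(\Delta_n,j_n)$ recording the size of generation $n$ and the rightmost activated site, and shows that the one-step probability of absorption into $\{\Delta=0\}$ is bounded below, uniformly over all states, by $\inf_{k}\prod_{i=1}^k\bigl(1-((1-l_i)/l_i)^{k-i+1}\bigr)$, which is strictly positive exactly when $\inf_i l_i>1/2$ (Lemma~\ref{lem:test0}(3)); a uniformly positive absorption probability at every step forces a.s.\ absorption, hence $\Prob(B_\infty)=0$. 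You instead push the firework-process reduction (the same coupling the paper uses to prove Proposition~\ref{pro:rumor}) all the way: the overshoot $Y_R=\max_{k\le R}\tilde M_k-R$ satisfies the Lindley-type recursion $Y_{R+1}=\max(Y_R-1,W_{R+1})$, you dominate it by a homogeneous chain using $\liminf_n l_n>1/2$, and you conclude via a drift computation and Foster's criterion. Each approach has its advantages: the paper's is more self-contained and elementary (no Lyapunov machinery, and it yields an explicit lower bound on the per-generation extinction probability), while yours reuses the firework coupling already set up for Proposition~\ref{pro:rumor} and replaces the product estimate by a one-line drift bound, making the mechanism (uniformly negative drift of the overshoot) quite transparent. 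Two small points to tighten: for irreducibility of $\tilde Y$ you should also note that $0$ leads to every state (which holds since $\Prob(\tilde W=m)=q^m(1-q)>0$), and you should state explicitly that $Y_R=0$ for some $R$ implies site $R+1$ is never activated (via the inductive characterization of activation), which is what converts recurrence of $0$ into $\Prob(B_\infty)=0$. Neither gap is substantive.
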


In the \textit{left drift} case,
if $\inf_{n \in \N} l_n = 1/2$ then  both $\Prob(B_\infty)>0$ or
$\Prob(B_\infty)=0$ are possible. Indeed, Example~\ref{exmp:firstpossibility} shows that survival is possible,
while extinction is shown by Example~\ref{exmp:secondpossibility} where 
$l_n\downarrow 1/2$ slowly enough that $\Prob(B_\infty)=0$.

\begin{exmp}\label{exmp:secondpossibility}
Consider two decreasing sequences $\{q_i\}_{i \in \N}$, $\{\delta_i\}_{i \in \N}$ such that
$q_i\downarrow 1/2$ and $\delta_i \downarrow 0$. Let $n_0=0$, we construct
$\{n_k\}_{k \in \N}$ iteratively.
Suppose we defined $n_i$ for all $i \le k$. Let $\mathcal{M}_k$ be the random
walk system with left jump probabilities $\{\widehat l_i{(k)}\}_{i \in \N}$ where
\[
\widehat l_i{(k)} =
\begin{cases}
 q_j & \textrm{if } j \in \{n_j, \ldots, n_{j+1}-1\}, \ \forall j <k,\\
 q_k & \textrm{if } i \ge n_k.
\end{cases}
\]
We know that, since $\inf_i \widehat l_i{(k)} >1/2$,
by Proposition~\ref{pro:RWapproach}, almost surely for the model $\mathcal{M}_k$
we will have only a finite number of activations. 
Hence it is possible to find $n_{k+1}$ large enough such
that, with probability at least $\delta_{k+1}$, no particles in $\{i\colon i \ge n_{k+1}\}$ will be
activated in the model $\mathcal{M}_k$.

Now for all $j\in\N$ define $l_j=q_k$ if $j \in \{n_k, \ldots, n_{k+1}-1\}$
and denote by $\mathcal{M}$ the corresponding random walk system. Clearly
$l_n\downarrow 1/2$.
For the model $\mathcal{M}$,
 the 
probability of activating particles
in the $(k+1)$-th block 
is at
most $1-\delta_{k+1}$ (for all $k$). This is due to the fact that,
since $l_i=\widehat l_i{(k)}$ for all $i <n_{k+1}$, before the activation of
the $n_{k+1}$-th particle,
$\mathcal{M}$ and $\mathcal{M}_k$ have the same behavior.
Hence, 
with probability 1, sooner or later there
will be no new activations and $\Prob(B_\infty)=0$.
\end{exmp}

\section{Particles with geometrical lifespan}
\label{sec:mortal}

We now suppose that the particle at $n$ survives, at each step, with probability $p_n$, thus, once activated,
it has a lifespan which is $\mathcal{G}(1-p_n)$-distributed.
More precisely, the probability that the lifetime of the $n$-th particle equals $k \ge 1$ is $p_n^{k-1}(1-p_n)$
if $p_n \in [0,1)$ and $0$ if $p_n=1$
(in this last case the lifetime is infinite a.s.).
The main differences between the immortal particle case are that here global survival
is not guaranteed (but it has the same probability as the event of infinite activations)
and that the knowledge of the drift, \textit{a priori}, plays a minor role. Indeed particles
with right drift will activate a finite number of sites almost surely and particles
with left drift have 
probability strictly smaller than $1$ of visiting
the origin.
We assume in this whole section that $p_n \in [0,1]$ for all $n$, that is,
that every particle can be mortal as well as immortal (clearly $p_0>0$ otherwise the process would not start at all).
Observe that if $p_n=0$ for some $n \in \N$ then those particles do not participate 
in the evolution of the system therefore it is like having a system with empty vertices.
On the other hand, if $p_n=1$ for some $n$, then there is global survival, since
there is a positive probability of activating those particles.
We also assume that $l_n \in (0,1)$ for all $n\in \N$ and $l_n \not = 1/2$ for all $n$ such that $p_n=1$.

%

\begin{rem}
By using a coupling between the mortal process and a process with immortal particles
with the same sequence $\{l_n\}_{n \in \N}$, it is clear that all sufficient conditions for
global or local extinction given in Section~\ref{sec:immortal} extend to the mortal case
(compare Examples~\ref{exmp:rightd} and \ref{exmp:firstpossibility} and Corollary~\ref{cor:power}). 
Indeed the probability of local (resp.~global) survival is nondecreasing with 
respect to $\{p_n\}_{n \in \N}$.
As for the dependence of the probabilities of survival with respect to $\{l_n\}_{n \in \N}$ the 
discussion before Theorem~\ref{th:localsurv} applies.
%
Even in the mortal case the most interesting situations are
$\sup l_n=1/2$ (if \textit{right drift}) and $\inf l_n=1/2$ (if \textit{left drift}).
Indeed if $\sup l_n<1/2$ then, according to Corollary~\ref{cor:condition1}, then 
there is local extinction even for an immortal particle system,
thus there is no local survival in the mortal case. 
If $\inf l_n>1/2$, by Proposition~\ref{pro:RWapproach},
even in the immortal case we activate only a finite number of particles almost surely,
thus there is global extinction in the mortal case.
\end{rem}

\subsection{Conditions for global survival/extinction}\label{subsec:geometricglobal}

In this case global survival is not guaranteed and has the same probability
of $B_\infty$, that is, the event of infinite activation.
In order to activate infinitely many sites
we need the action of infinitely many particles.
It is no longer true, as it was in the case of immortal particles,
that it suffices that there exists a particle with right drift
to have infinite activation (that particle would still be activated
with positive probability but, in the mortal case, it will almost surely
activate only a finite number of particles).
The following results give sufficient conditions for global extinction and global survival respectively.
\begin{pro}\label{excor:geomglobalextinction} 
If $\sup_n p_n<1$ then $\Prob(B_\infty)=0$ and
there is no global survival almost surely.
\end{pro}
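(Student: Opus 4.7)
The plan is to identify $B_\infty$ with an event for a reach process on $\N$ driven by the independent walks $\{Z^i\}_{i \in \N}$, and then to show via a Foster--Lyapunov argument that this reach process is almost surely bounded. Set $p := \sup_n p_n < 1$, let $V_i$ be the rightmost site ever visited by the walk $Z^i$, and put $Y_i := V_i - i \ge 0$. The $Y_i$ are independent, and the first key estimate is
\[
\Prob(Y_i \ge k) = u_i^k \le p^k \qquad (k \ge 0),
\]
where $u_i$ denotes the probability that $Z^i$ ever moves up by one unit from its starting point. The equality follows from the strong Markov property applied to the spatially homogeneous walk $Z^i$, and the bound $u_i \le p$ from the one-step identity $u_i = p_i(1-l_i) + p_i l_i u_i^2 \le p_i(1-l_i) + p_i l_i = p_i$. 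By stochastic domination I can and will assume the $Y_i$ are i.i.d.\ with $\Prob(Y_i \ge k) = p^k$.

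Next I would show that $B_n = \{n \in R\}$, where the reach set $R \subseteq \N$ is defined recursively by $0 \in R$ and, for $n \ge 1$, $n \in R$ iff some $i < n$ with $i \in R$ satisfies $V_i \ge n$. Since activations propagate by nearest-neighbor moves from $0$, a standard induction shows the activation times are strictly ordered ($T_0 < T_1 < T_2 < \cdots$) and forces $R$ to be an initial segment $\{0, 1, \ldots, K\}$ of $\N$, so $B_\infty = \{K = \infty\}$. Unrolling the reach recursion $M_0 := 0$, $M_{n+1} := \max_{i \le M_n} V_i$ yields the clean equivalence that $K < \infty$ if and only if $\max_{0 \le i \le n}(Y_i + i) = n$ for some $n$; equivalently, defining $H_n := \max_{0 \le i \le n}(Y_i - (n-i)) \ge 0$, iff $H_n = 0$ for some $n$. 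The sequence $(H_n)$ satisfies the Lindley-type recursion $H_n = \max(H_{n-1} - 1,\, Y_n)$ and is an irreducible Markov chain on $\N$: from state $h$ the event $\{Y_{n+1} = \cdots = Y_{n+h} = 0\}$ has probability at least $(1-p)^h > 0$ and drives the chain to $0$.

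To conclude, I would apply Foster's criterion with Lyapunov function $V(h) = h$. The drift estimate
\[
\E[H_{n+1} - H_n \mid H_n = h] = \E[(Y - h + 1)^+] - 1 \le \sum_{k \ge h} \Prob(Y \ge k) - 1 = \frac{p^h}{1-p} - 1
\]
is at most $-1/2$ for all $h$ exceeding a fixed threshold, so $(H_n)$ is positive recurrent and in particular hits $0$ almost surely. Hence $K < \infty$ a.s., which gives $\Prob(B_\infty) = 0$; the ``no global survival'' assertion then follows from the already observed equivalence between global survival and $B_\infty$. The main obstacle is the set-theoretic reduction in the second step, namely rigorously verifying that $R$ is an initial segment of $\N$ and translating its finiteness into the hitting-time event $\{\exists n\colon H_n = 0\}$ for the Lindley chain; once this is in place, the exponential tail bound on $Y_i$ makes the Foster--Lyapunov conclusion routine.
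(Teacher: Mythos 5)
Your proof is correct, but it follows a genuinely different route from the paper's. The paper reuses the generation chain $(\Delta_n,j_n)$ introduced in the proof of Proposition~\ref{pro:RWapproach} (number of newly activated particles and rightmost activated site) and observes that from any state the activation front stops with probability at least $\prod_{j\ge1}(1-p^j)>0$, uniformly: it suffices that each of the newly activated particles has a lifespan too short to take the required number of steps to the right. A uniformly positive one-step extinction probability then forces almost sure absorption, and the whole proof is three lines. You instead pass to the firework/reach reduction (which is exactly the coupling the paper sets up in the proof of Proposition~\ref{pro:rumor}), derive the exponential tail $\Prob(Y_i\ge k)=u_i^k\le p^k$ from the one-step identity $u_i=p_i(1-l_i)+p_il_iu_i^2$ (consistent with the paper's formula \eqref{eq:1stepright} at $z=1$), encode the overshoot of the activation front as a Lindley-type chain $H_n=\max(H_{n-1}-1,Y_n)$, and apply Foster's criterion. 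All the steps you flag as delicate do go through: $R$ is an initial segment because nearest-neighbour walkers started in $[0,n]$ must pass through $n$ before reaching $n+1$, and the equivalence $\{K<\infty\}=\{\exists n\colon H_n=0\}$ is easily checked in both directions ($H_K=0$ when $K<\infty$; conversely $H_n=0$ caps the reach at $n$). What your approach buys is more information: positive recurrence of $H$ gives quantitative control (e.g.\ exponential tails for the total number of activated sites), and the argument isolates cleanly that only the bound $\sup_i u_i<1$ on the one-site right-passage probabilities matters. What it costs is length and machinery; the paper's uniform-extinction-probability argument is more elementary and needs no domination, no irreducibility check, and no Lyapunov function.
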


\begin{teo}\label{excor:geomglobalsurvival} 
If there exists $L \in \N$ and a sequence of pairwise disjoint  sets $\{\mathcal{B}_n\}_{n \in \N}$ such that 
$\#\mathcal{B}_n=L$, $\sup_{n \in \N} (\max(\mathcal{B}_{n+1})-\min(\mathcal{B}_n))<+\infty$ and
\[
\sum_{k \in \bigcup_{n \in \N} \mathcal{B}_{n}} (1-p_{k})^{L/2}<+\infty, \quad \sum_{k \in \bigcup_{n \in \N} 
\mathcal{B}_{n}\colon  p_k l_k >1/2} (l_{k}-1/2)^{L} 
< +\infty, 
\]
 then there is global survival.
\end{teo}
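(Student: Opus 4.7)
The strategy is the block method: I build independent events $G_n$ depending only on the walks $\{Z^i\}_{i\in\mathcal B_n}$ and show that $\sum_n\pr(G_n^c)<+\infty$ under the two hypotheses, so that $\bigcap_{n\ge N}G_n$ has positive probability and triggers global survival.

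\emph{Single-walk estimate.} Let $u_i$ be the probability that $Z^i$, starting at $i$, ever reaches $i+1$ before being killed. A first-step decomposition yields $p_il_iu_i^2-u_i+p_i(1-l_i)=0$, and taking the admissible root gives
\[
u_i=\frac{2p_i(1-l_i)}{1+\sqrt{(1-p_i^2)+p_i^2(2l_i-1)^2}}.
\]
Using $\sqrt{a+b}\le\sqrt a+\sqrt b$ and distinguishing the signs of $2l_i-1$ and $2p_il_i-1$, a short case analysis produces an absolute constant $C$ with
\[
1-u_i\le C\bigl(\sqrt{1-p_i}+(l_i-1/2)_+\,\I_{\{p_il_i>1/2\}}\bigr).
\]
The indicator is crucial: when $l_i>1/2$ but $p_il_i\le 1/2$ one has $1-p_i\ge 1-1/(2l_i)\ge l_i-1/2$, so the linear term is absorbed into $\sqrt{1-p_i}$. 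By the strong Markov property, $\pr(Z^i\text{ reaches }i+r)=u_i^r$ and hence $1-u_i^r\le r(1-u_i)$.

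\emph{Independent block events and summability.} Set $D:=\sup_n(\max\mathcal B_{n+1}-\min\mathcal B_n)<+\infty$ and
\[
G_n:=\bigl\{\exists\, i\in\mathcal B_n:\ Z^i\text{ visits }\max\mathcal B_{n+1}\text{ before dying}\bigr\}.
\]
Since the blocks are pairwise disjoint, the $G_n$'s are independent; independence of walks within a single block then gives
\[
\pr(G_n^c)=\prod_{i\in\mathcal B_n}\bigl(1-u_i^{\max\mathcal B_{n+1}-i}\bigr)\le D^L\prod_{i\in\mathcal B_n}(1-u_i).
\]
Plugging in the single-walk estimate and chaining AM--GM with Jensen's inequality for $t\mapsto t^L$ (yielding $\prod_{i\in\mathcal B_n}x_i\le\tfrac1L\sum_{i\in\mathcal B_n}x_i^L$ for nonnegative $x_i$), then $(a+b)^L\le 2^{L-1}(a^L+b^L)$, I obtain, for some $K=K(L,D,C)$,
\[
\sum_n\pr(G_n^c)\le K\!\left[\sum_{k\in\bigcup_n\mathcal B_n}(1-p_k)^{L/2}+\sum_{k:\,p_kl_k>1/2}(l_k-1/2)^L\right]<+\infty
\]
by hypothesis.

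\emph{Conclusion.} By Borel--Cantelli, $\pr(\liminf_n G_n)=1$, so one can choose $N$ large enough that $0\notin\bigcup_{n\ge N}\mathcal B_n$ and $\pr(\bigcap_{n\ge N}G_n)>0$. The event $\{Z^0\text{ visits }\max\mathcal B_N\text{ before dying}\}$ has probability $u_0^{\max\mathcal B_N}>0$ and depends only on $Z^0$, hence is independent of $\bigcap_{n\ge N}G_n$. On the intersection of these two positive-probability events, $Z^0$'s trajectory activates every site in $[1,\max\mathcal B_N]$, in particular a witness $i_N\in\mathcal B_N$ for $G_N$; then $Z^{i_N}$ reaches $\max\mathcal B_{N+1}$, activating every site between $i_N$ and $\max\mathcal B_{N+1}$ and in particular a witness $i_{N+1}\in\mathcal B_{N+1}$ for $G_{N+1}$; iterating, at least one particle of every $\mathcal B_n$ with $n\ge N$ is activated, so $\pr(B_\infty)>0$, which in the mortal case equals the probability of global survival. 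The main difficulty is the single-walk estimate: pinning down the indicator $\I_{\{p_il_i>1/2\}}$ is what matches the hypothesis, which restricts $(l_k-1/2)^L$ only on that set, while the scale $\sqrt{1-p_i}$ is what forces the exponent $L/2$ on $(1-p_k)$.
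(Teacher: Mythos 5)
Your proof is correct and follows essentially the same route as the paper's: the same first-passage probability $u_i=2p_i(1-l_i)/(1+\sqrt{1-4p_i^2l_i(1-l_i)})$, the same case split at $p_il_i=1/2$ with the drift term absorbed into $\sqrt{1-p_i}$ when $p_il_i\le 1/2$, the bound $1-u_i^r\le r(1-u_i)$, and AM--GM over each block to reach the two hypothesized series. The only (cosmetic) differences are that you conclude via Borel--Cantelli on a tail intersection rather than the paper's infinite-product positivity lemma (which also spares you the paper's preliminary reduction to blocks containing a particle with $p_k>0$), and you use $(a+b)^L\le 2^{L-1}(a^L+b^L)$ where the paper invokes Minkowski.
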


A sufficient condition 
for global survival is the existence of $L,m \in \N\setminus \{0\}$ and of an increasing sequence 
$\{n_j\}_{j \in \N}$, such that $n_{j+1}-n_j \le m$ and 
$\sum_{j\ in \N} (1-p_{n_j})^{L/2} < +\infty$, $\sum_{j\colon p_{n_j} l_{n_j} >1/2} (l_{n_j}-1/2)^L < +\infty$
(see the discussion after Theorem~\ref{pro:blocks}).

\subsection{Conditions for local survival/extinction}\label{subsec:geometriclocal}

From now on we deal with local survival and local extinction. 
%
The first assertion follows from a Borel-Cantelli argument (as Theorem~\ref{th:localsurv}(1))
once we note that 
\begin{equation}\label{eq:ngoes0}
 \pr(A_n|B_n)= \left(\frac{1-\sqrt{1-4p_n^2l_n(1-l_n)}}{2p_n(1-l_n)}
\right)^n,
\end{equation}
which follows from random walk computations (see Section~\ref{sec:Proofs}).

\begin{teo}\label{pro:geomlocalextinction}
If $\sum_{n} \left(
\frac{1-\sqrt{1-4p_n^2l_n(1-l_n)}}{2p_n(1-l_n)}
\right)^n <+\infty$ then $\Prob(A_n\ i.o.)=0$. 
In particular
\begin{enumerate}
\item let $ \Delta_n=1-p_n$ and $\delta_n=1/2-l_n$; 
if $\delta_n \wedge 0 \to 0$ as $n \to \infty$ and \\
$\liminf_{n \to \infty} n \big (2\delta_n+\sqrt{2\Delta_n+4\delta_n^2} \big )/\log(n)>1$ 
then  $\Prob(A_n\ i.o.)=0$;
 \item
if $\sum_n p_n^n(1-(1-2l_n)^+)^n <+\infty$ 
then
$\Prob(A_n\ i.o.)=0$
(where $(1-2l_n)^+=(1-2l_n)\vee0$); 
\item
 if $\sum_n p_n^n <+\infty$ (for instance, if $ \sup_n p_n<1$)
then $\Prob(A_n\ i.o.)=0$ for any choice of $\{l_n\}_n$.
\end{enumerate}
\end{teo}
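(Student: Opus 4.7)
The plan is to combine a Borel--Cantelli argument with the explicit formula~\eqref{eq:ngoes0} for $\Prob(A_n\mid B_n)$. I would first prove that formula. Conditional on $B_n$, the post-activation trajectory of the $n$-th particle is distributed as $\{Z^n_k\}_{k\in\N}$ starting at $n$ (at each step: left with probability $p_n l_n$, right with probability $p_n(1-l_n)$, into $D$ with probability $1-p_n$), and is independent of the random variables that determine the time of activation. By translation invariance this gives $\Prob(A_n\mid B_n)=r_n^n$, where $r_n$ is the probability of descending one unit before entering $D$. First-step analysis then gives $r_n=p_n l_n+p_n(1-l_n)r_n^2$, whose smallest root in $[0,1]$ (the one obtained by iterating $F(x)=p_n l_n+p_n(1-l_n)x^2$ from $0$) is the right-hand side of~\eqref{eq:ngoes0}. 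Since $A_n\sbs B_n$ we obtain $\Prob(A_n)\le r_n^n$, so Borel--Cantelli gives the main assertion as soon as $\sum_n r_n^n<+\infty$.

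For parts~(2) and~(3) I would use two elementary bounds on $r_n$. Writing $r_n=2p_n l_n/(1+s_n)$ with $s_n:=\sqrt{1-4p_n^2l_n(1-l_n)}\ge0$ gives $r_n\le 2p_n l_n$; while starting from $r_n=p_n(l_n+(1-l_n)r_n^2)$ and using $r_n\le 1$ gives $r_n\le p_n$. Combining, $r_n\le p_n\min(1,2l_n)=p_n(1-(1-2l_n)^+)$, so $r_n^n\le p_n^n(1-(1-2l_n)^+)^n$. This proves~(2), and~(3) follows since $1-(1-2l_n)^+\le 1$.

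For part~(1) I would use the bound $r_n^n\le e^{-n(1-r_n)}$ (consequence of $\log(1/r_n)\ge 1-r_n$) together with a lower estimate $1-r_n\ge(1-o(1))T_n$, where $T_n:=2\delta_n+\sqrt{2\Delta_n+4\delta_n^2}$. A direct computation yields
\[
1-r_n\;=\;\frac{2\delta_n+s_n-\Delta_n(1+2\delta_n)}{(1-\Delta_n)(1+2\delta_n)},\qquad s_n^2\;=\;4\delta_n^2+2\Delta_n-\Delta_n^2-8\Delta_n\delta_n^2+4\Delta_n^2\delta_n^2.
\]
I would then partition the indices by whether $\delta_n$ and $\Delta_n$ are both smaller than a fixed small $\eta>0$. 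On the complement, one of $r_n\le 2p_n l_n\le 1-2\delta_n$ or $r_n\le p_n\le 1-\Delta_n$ is bounded away from $1$, forcing geometric decay of $r_n^n$. On the set where both are small, the hypothesis $\delta_n\wedge 0\to 0$ ensures (for large $n$) that $|\delta_n|,\Delta_n$ are genuinely small, the denominator is $1+O(\eta)$, one has $s_n=(1-O(\eta))\sqrt{4\delta_n^2+2\Delta_n}$, and $\Delta_n=O(\sqrt{\eta})\sqrt{4\delta_n^2+2\Delta_n}$, so $1-r_n\ge(1-O(\sqrt{\eta}))T_n$. Taking $\eta$ sufficiently small, the hypothesis $\liminf_n nT_n/\log n>1$ gives $n(1-r_n)>(1+\varepsilon)\log n$ eventually for some $\varepsilon>0$, whence $r_n^n\le n^{-1-\varepsilon}$ and $\sum_n r_n^n<+\infty$.

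The main technical obstacle is keeping the multiplicative constant in $1-r_n\sim T_n$ sharp, since the hypothesis of~(1) only gives $\liminf>1$: any stray factor---for example from mishandling the denominator $(1-\Delta_n)(1+2\delta_n)$, which tends to $1$ and not to $2$---would ruin the conclusion. One must also check that when $\delta_n<0$ the quantity $2\delta_n+s_n$ does not collapse to a lower order than $T_n$; this is precisely where the assumption $\delta_n\wedge 0\to 0$ is used, together with the lower bound $s_n\ge 2|\delta_n|$.
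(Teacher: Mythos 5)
Your proposal is correct and follows essentially the same route as the paper: derive the hitting probability $\Prob(A_n\mid B_n)=r_n^n$ with $r_n$ the minimal root of $p_nl_n+p_n(1-l_n)x^2=x$ (the paper gets this via the generating-function recurrence \eqref{eq:recurrenceforF}), apply Borel--Cantelli, and obtain (2) and (3) from $r_n\le\min(p_n,2p_nl_n)$ exactly as in the text. The only (minor) divergence is in part (1), where the paper first reduces to $\Delta_n\to0$, $\delta_n\to0$ by a monotonicity/coupling argument and then combines the expansion \eqref{eq:asym} with Lemma~\ref{lem:sumexp}(3), whereas you partition the indices by the size of $\Delta_n,|\delta_n|$ and use $r_n^n\le e^{-n(1-r_n)}$ directly; both handle the sharp constant correctly, and your check that $\Delta_n=O(\sqrt{\eta})\,T_n$ and $s_n\ge 2|\delta_n|$ indeed closes the $\delta_n<0$ case.
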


The tricky part is finding conditions for local survival:
on the one hand we need that all particles get
activated sooner or later and on the other hand that infinitely many
of them visit the origin.
To avoid dealing with situations where a particle is required
both to visit a certain number of sites and the origin,
we exploit once again the idea of choosing pairwise disjoint blocks in $\N$.
Some blocks will take care of activation and
the others of local survival.
We choose a sequence of pairwise disjoint sets $\{\mathcal{B}_n\}_{n \in \N}$ of cardinality $L$. The event where,
for all 
$n \in \N$ such that $\max(\mathcal{B}_{2n+2}) >
\max(\mathcal{B}_{2n})$, at least one of the particles in $\mathcal{B}_{2n}$ visits 
the rightmost vertex of $\mathcal{B}_{2n+2}$  
and an infinite number of particles in $\bigcup_{n \in \N} \mathcal{B}_{2n+1}$ visits the origin
is a subset of the event of local survival. Thus a sufficient
condition for local survival is that this event has positive probability.
In the following figure we picture the case $\mathcal{B}_n:=[nL, (n+1)L-1]$.
\begin{center}
  \includegraphics[width=11truecm]{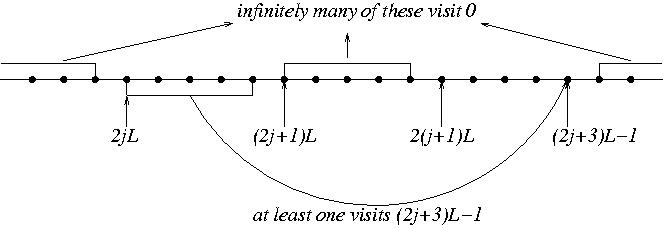}
\\{Figure 2.}
\end{center}

By using \eqref{eq:ngoes0} and the fact that the probability that the $n$-th particle,
if activated, ever visits the site $m>n$ is
\begin{equation}\label{eq:ngoesm}
  \left(\frac{1-\sqrt{1-4p_n^2l_n(1-l_n)}}{2p_nl_n}
\right)^{m-n},
\end{equation}
one gets a lower bound for the probability of local survival. 
The main idea in Theorem~\ref{excor:geomlocalsurvival} is that even labelled blocks 
take care of infinite activation while odd labelled blocks provide infinitely many particles
visiting 0. In particular we will suppose that 
 $\{\mathcal{B}_{2n}\}_{n \in \N}$ satisfies the conditions in Theorem~\ref{excor:geomglobalsurvival}  
(thus guaranteeing global survival).

\begin{teo}\label{excor:geomlocalsurvival}
Suppose that there exists $L\in \N$ and a sequence of pairwise disjoint sets $\{\mathcal{B}_n\}_{n \in \N}$
such that $\{\mathcal{B}_{2n}\}_{n \in \N}$ satisfies the hypotheses of Theorem~\ref{excor:geomglobalsurvival}.
Define $\Delta_k:=1-p_k$,  $\delta_k:=1/2-l_k$
and $\mathcal{O}:=\bigcup_{j \in \N} \mathcal{B}_{2j+1}$.  
%
If  one of the following holds:
\begin{enumerate}
\item
$\displaystyle \liminf_{k \to \infty, \, k \in \mathcal{O}} k \Big (2\delta_k+\sqrt{2\Delta_k+4\delta_k^2}\Big )<\infty$
\item
$\displaystyle 2\delta_k+\sqrt{2\Delta_k+4\delta_k^2} \le \log(k)/k$,
for all sufficiently large $k \in \mathcal{O}$
\item $\displaystyle 
\sum_{
{\scriptscriptstyle k \in  \mathcal{O}\colon 
\scriptscriptstyle 
p_k(1-l_k)>1/2
}}
  k^L \delta_k^{L} + \sum_{k \in  \mathcal{O}} k^L\Delta_k^{L/2}< +\infty$
\end{enumerate}
then there is local survival.
\end{teo}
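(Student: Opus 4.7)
The plan is to exploit the pairwise disjoint block structure to decouple two tasks that will be carried out by independent sets of walks: the even-indexed blocks $\mathcal{B}_{2n}$ will propagate activation to the right (by furnishing global survival via Theorem~\ref{excor:geomglobalsurvival}), while the odd-indexed blocks $\mathcal{B}_{2n+1}$ will supply infinitely many particles that return to $0$. Since $\{Z^k\}_{k \in \N}$ are independent by construction, events measurable with respect to walks in disjoint index sets are independent, and this is what will allow the two halves to be combined.

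First I would invoke Theorem~\ref{excor:geomglobalsurvival} applied to the subfamily $\{\mathcal{B}_{2n}\}_{n \in \N}$ to produce an event $G$ depending only on $\{Z^k : k \in \bigcup_n \mathcal{B}_{2n}\}$, with $\Prob(G)>0$, on which infinitely many sites are activated via particles from even blocks alone. Because the walks are nearest-neighbor and the only initial active particle sits at $0$, at every time the set of activated sites is an initial segment $\{0,1,\dots,A_t\}$ of $\N$; consequently on $G$ one has $A_t\to\infty$, and every index in $\mathcal{O}=\bigcup_j \mathcal{B}_{2j+1}$ is eventually activated.

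Next, for $k \in \mathcal{O}$ set $C_k:=\{Z^k \text{ ever visits } 0\}$ and for $j \in \N$ set $D_j := \bigcup_{k \in \mathcal{B}_{2j+1}} C_k$. The same random walk computation that gives \eqref{eq:ngoes0} yields $\Prob(C_k)=q_k^k$ with
\[
q_k := \frac{1-\sqrt{1-4p_k^2 l_k(1-l_k)}}{2p_k(1-l_k)}.
\]
Disjointness of the blocks together with independence of $\{Z^k\}$ make $\{D_j\}_{j \in \N}$ mutually independent and jointly independent of $G$. Thus once $\sum_j \Prob(D_j)=+\infty$ is established, the second Borel--Cantelli lemma forces $\Prob(\{D_j \text{ i.o.}\})=1$ and hence $\Prob(G\cap \{D_j \text{ i.o.}\})=\Prob(G)>0$; on this intersection infinitely many odd-block particles are activated (by the previous paragraph) and subsequently visit $0$ (by the definition of $D_j$), which is exactly local survival.

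It therefore suffices to verify $\sum_j \Prob(D_j)=+\infty$ under each of the hypotheses (1)--(3). Using $\Prob(D_j)\ge \max_{k \in \mathcal{B}_{2j+1}} q_k^k \ge L^{-1}\sum_{k \in \mathcal{B}_{2j+1}} q_k^k$, this reduces to showing $\sum_{k \in \mathcal{O}} q_k^k=+\infty$. Rationalizing the square root in $q_k$ and expanding around $(\delta_k,\Delta_k)=(0,0)$ yields the asymptotic $-\log q_k = 2\delta_k+\sqrt{4\delta_k^2+2\Delta_k}+O(\delta_k^2+\Delta_k)$. Condition (1) provides a subsequence $\{k_j\}\subset\mathcal{O}$ along which $k_j(-\log q_{k_j})$ stays bounded, so $q_{k_j}^{k_j}$ is bounded away from $0$ and the sum diverges; condition (2) gives directly $q_k^k\ge 1/k$ for all large $k\in \mathcal{O}$, so $\sum q_k^k\ge\sum 1/k=+\infty$. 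For condition (3), finiteness of $\sum_{\mathcal{O}} k^L\Delta_k^{L/2}$ forces $k\sqrt{\Delta_k}\to 0$ along $\mathcal{O}$, while finiteness of the first sum forces $k\delta_k\to 0$ on $\{k \in \mathcal{O} : p_k(1-l_k)>1/2\}$; a case split handling $\delta_k\le 0$ (where the rationalization gives $-\log q_k\le \sqrt{2\Delta_k}$) and $\delta_k>0$ separately (in the latter subcase, $p_k(1-l_k)\le 1/2$ forces $\delta_k \le \Delta_k/(2-2\Delta_k)$, while $p_k(1-l_k)>1/2$ is handled by the first sum) yields $k(-\log q_k)\to 0$ in every regime, hence $q_k^k\to 1$ and the sum diverges. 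The part I anticipate as the main obstacle is condition (3): no one-line inequality reduces it directly to divergence, and the argument must juggle both finiteness hypotheses together with a case analysis on the sign of $\delta_k$ and its size relative to $\sqrt{\Delta_k}$.
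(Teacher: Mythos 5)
Your overall architecture coincides with the paper's: the even blocks handle activation by invoking Theorem~\ref{excor:geomglobalsurvival}, the odd blocks supply returns to the origin, independence of the $Z$-walks over disjoint index sets plus Borel--Cantelli combine the two halves, and everything reduces to the divergence of $\sum_{k\in\mathcal{O}}q_k^k$ where $q_k$ is the one-step return factor of \eqref{eq:ngoes0}. Your treatments of (1) and (2) match the paper's, which works with $a_k=1-q_k$ and Lemma~\ref{lem:sumexp} instead of $-\log q_k$ (a cosmetic difference). Where you genuinely depart is condition (3): the paper keeps the block structure, bounds $1-q_k^k\le k(1-q_k)$, and then uses the arithmetic--geometric mean and Minkowski inequalities to convert $\sum_n\prod_{k\in\mathcal{B}_{2n+1}}k\widetilde S_k<\infty$ into the stated $\ell^L$-type condition. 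You instead observe that convergence of the two series already forces $k\sqrt{\Delta_k}\to0$ along $\mathcal{O}$ and, after the case split on the sign of $\delta_k$ and on $p_k(1-l_k)\gtrless 1/2$, $k\delta_k^+\to 0$ as well, whence $k(1-q_k)\to0$, $q_k^k\to1$, and the series diverges for the trivial reason that its terms do not vanish. This is more elementary, and it in fact reveals that hypothesis (3) is subsumed by hypothesis (1) (the liminf there is $0$); the paper's heavier route is what its block machinery naturally produces but buys nothing extra for this theorem.

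Two points to tighten, neither fatal. First, in (2) the divergence you need is of $\sum_{k\in\mathcal{O}}1/k$, not of the full harmonic series; this uses the density of $\mathcal{O}$ coming from $\sup_n\big(\max(\mathcal{B}_{2n+2})-\min(\mathcal{B}_{2n})\big)<\infty$ (the paper makes exactly this remark after applying Lemma~\ref{lem:sumexp}(2)), and the lower bound is $c/k$ rather than $1/k$. Second, your expansion $-\log q_k=2\delta_k+\sqrt{2\Delta_k+4\delta_k^2}+O(\delta_k^2+\Delta_k)$ is too crude precisely in the regime relevant to (1) where $\delta_{k_j}$ may stay negative and bounded away from $0$: there $2\delta_k+\sqrt{2\Delta_k+4\delta_k^2}=2\Delta_k/\big(\sqrt{2\Delta_k+4\delta_k^2}-2\delta_k\big)\asymp\Delta_k$ can be dominated by your error term. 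The repair is the exact one-sided bound obtained by rationalizing, $1-q_k\le 2\delta_k+\sqrt{2\Delta_k+4\delta_k^2}+\Delta_k(1-2\delta_k)$ (the paper asserts the even cleaner $1-q_k\le 2\delta_k+\sqrt{2\Delta_k+4\delta_k^2}$); since $k\Delta_k$ is controlled by $k\big(2\delta_k+\sqrt{2\Delta_k+4\delta_k^2}\big)$ up to a universal constant, boundedness of $k_j(1-q_{k_j})$, hence a positive lower bound for $q_{k_j}^{k_j}$, follows and your conclusion for (1) stands.
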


Each of the conditions $(1),\,(2)$ and $(3)$ of Theorem~\ref{excor:geomlocalsurvival} implies the
divergence of
$\sum_{k \in \mathcal{O}}  
\big (1-\sqrt{1-4p_k^2l_k(1-l_k)} \big )^k/(2p_k(1-l_k))^k$ which, in turn, implies that, conditioned on infinite activation, 
the probability that an infinite number of particles in $\mathcal{E}$
visits the origin is positive.

Moreover, if we take $L=L_1$, then a sufficient condition for local (and global) survival is
$\sum_n n^L(1-p_n)^{L/2} < +\infty$, $\sum_{n\colon p_n l_n >1/2} (l_n-1/2)^L < +\infty$ and
$\sum_{n\colon p_n (1-l_n) >1/2} n^L(1/2-l_n)^L < +\infty$.

As a corollary we obtain the complete phase diagram in the case where $\Delta_k$ and $\delta_k$
decay polynomially (the analogous results in the immortal case have been studied in Examples~\ref{exmp:rightd}
and \ref{exmp:firstpossibility}).

\begin{cor} \label{cor:power}
Let $1-p_n \sim 1/n^\beta$ as $n \to \infty$ (where $\beta>0$).
\begin{enumerate}
\item If $l_n-1/2 \sim 1/n^\alpha$ as $n \to \infty$ (where $\alpha >0$)
 there is always global survival; moreover there is 
local survival if and only if $\beta \ge \min(2, 1+\alpha)$.
\item If $1/2-l_n\sim 1/n^\alpha$ as $n \to \infty$ (where $\alpha >0$)
there is always global survival; moreover there is 
local survival if and only if
$\beta \ge 2$ and $\alpha \ge 1$.
\end{enumerate}
\end{cor}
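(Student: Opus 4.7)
Set $\Delta_n := 1-p_n \sim n^{-\beta}$ and $\delta_n := 1/2 - l_n \sim \mp n^{-\alpha}$ in cases (1) and (2) respectively, and let $q_n := 2\delta_n + \sqrt{2\Delta_n + 4\delta_n^2}$, the quantity controlling both Theorem~\ref{pro:geomlocalextinction}(1) and Theorem~\ref{excor:geomlocalsurvival}(2). Throughout the plan is to use the simple blocks $\mathcal{B}_n := [nL,(n+1)L-1]$, with the integer $L$ chosen at the end. Global survival in both cases is immediate from Theorem~\ref{excor:geomglobalsurvival}: the first sum $\sum_k(1-p_k)^{L/2}\asymp\sum k^{-\beta L/2}$ converges once $L>2/\beta$; in case (2) the constraint $p_kl_k>1/2$ fails for all large $k$ (since $l_k<1/2$ and $p_k\le 1$), so the second sum has only finitely many terms and is trivially finite, while in case (1) it behaves like $\sum k^{-\alpha L}$, hence is summable for $L>1/\alpha$. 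Taking $L$ large enough yields global survival with no restriction on $\alpha,\beta>0$.

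For the local criterion I would pin down the polynomial order of $q_n$. In case (1), where $\delta_n<0$, rationalising gives $q_n\bigl(\sqrt{2\Delta_n+4\delta_n^2}+2|\delta_n|\bigr)=2\Delta_n$, and $\sqrt{2\Delta_n+4\delta_n^2}\asymp\sqrt{\Delta_n}+|\delta_n|$ yields
\[
q_n \;\asymp\; \frac{\Delta_n}{\sqrt{\Delta_n}+|\delta_n|} \;\asymp\; \min\bigl(\sqrt{\Delta_n},\,\Delta_n/|\delta_n|\bigr) \;\asymp\; n^{-\max(\beta/2,\,\beta-\alpha)},
\]
while in case (2), where $\delta_n>0$, the elementary bounds $\max(2\delta_n,\sqrt{2\Delta_n})\le q_n\le 4\delta_n+\sqrt{2\Delta_n}$ give
\[
q_n \;\asymp\; \delta_n+\sqrt{\Delta_n} \;\asymp\; n^{-\min(\alpha,\,\beta/2)}.
\]

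Now I would apply Theorem~\ref{excor:geomlocalsurvival}(2) to the blocks $\mathcal{B}_n$ above. Since $\mathcal{O}=\bigcup_j \mathcal{B}_{2j+1}$ has bounded gaps in $\N$, the hypothesis $q_k\le\log(k)/k$ for all large $k\in\mathcal{O}$ is implied by the same inequality for all large $k\in\N$, which in turn holds as soon as the $n$-exponent of $q_n$ is $\ge 1$. In case (1) this is $\max(\beta/2,\beta-\alpha)\ge 1$, equivalent to $\beta\ge 2$ or $\beta\ge 1+\alpha$, i.e.\ $\beta\ge\min(2,1+\alpha)$. In case (2) it is $\min(\alpha,\beta/2)\ge 1$, i.e.\ $\alpha\ge 1$ and $\beta\ge 2$. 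Conversely, when the relevant inequality fails, $nq_n\asymp n^{1-\gamma}$ with $\gamma<1$ diverges polynomially, hence $\liminf_n nq_n/\log(n)=+\infty>1$, and Theorem~\ref{pro:geomlocalextinction}(1) delivers $\Prob(A_n\text{ i.o.})=0$, which is local extinction.

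The main technical delicacy sits at the boundary equalities $\beta=2$, $\beta=1+\alpha$ (case 1) and $\alpha=1$, $\beta=2$ (case 2): there $q_n\asymp n^{-1}$, so one only obtains $q_n\le C/n$, not $q_n=o(1/n)$, but the $\log(n)$ factor on the right of Theorem~\ref{excor:geomlocalsurvival}(2) comfortably absorbs the constant and survival persists. The other subtle point is organisational: the asymptotic of $q_n$ in case (1) changes regime according to whether $\beta\le 2\alpha$ or $\beta>2\alpha$, and the criterion collapses cleanly to $\beta\ge\min(2,1+\alpha)$ only because the $\max$ in the exponent automatically selects the binding bound in each regime.
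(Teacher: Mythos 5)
Your proof is correct and follows essentially the same route as the paper's: both establish global survival from Theorem~\ref{excor:geomglobalsurvival} with $L>\max(2/\beta,1/\alpha)$, then analyse the polynomial order of $r_n=2\delta_n+\sqrt{2\Delta_n+4\delta_n^2}$ (your $q_n$) and conclude via Theorem~\ref{excor:geomlocalsurvival}(2) for survival and Theorem~\ref{pro:geomlocalextinction}(1) for extinction, with the boundary cases handled by the $\log$ factor exactly as in the paper. The only differences are cosmetic (you track exponents via $\max/\min$ instead of splitting on $\beta\lessgtr 2\alpha$ and omit the explicit constants $C_{\alpha,\beta}$, which are not needed).
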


The phase diagram given by the previous corollary can be compared with the results of Examples~\ref{exmp:rightd} and
\ref{exmp:firstpossibility}, see the following figure.
\medskip

\begin{center}
 \includegraphics[width=8truecm]{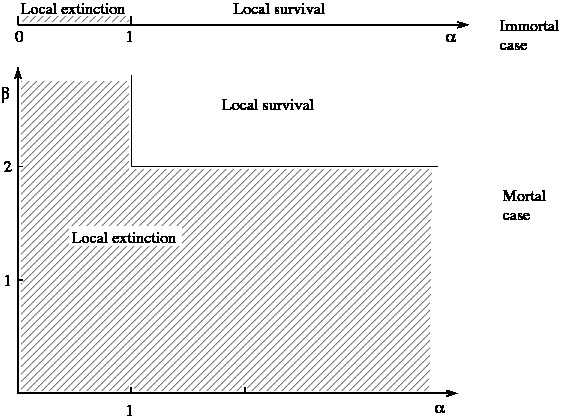} \hfill
\includegraphics[width=8truecm]{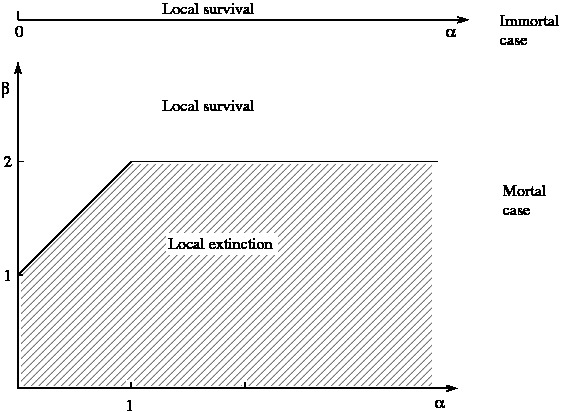} 
\\ \hfill {Figure 3. The right drift case.} \hfill \hfill {Figure 4. The left drift case.} \hfill \hfill \hfill
\end{center}

\section{Final Remarks}
\label{sec:final}

First of all we note that all our results in the immortal case apply 
to the inhomogeneous case where there is one particle at all vertices $\{n_k\}_{k \in \N}$
(where $\{n_k\}_{k \in \N}$ is a strictly increasing sequence in $\N$ such that $n_{k+1}-n_k\le m<+\infty$
for all $k \in \N$) and no particles 
elsewhere. The generalization is straightforward:
all sums and products run over the set of initially occupied vertices $\{n_k\colon  k \in \N\}$ instead of $\N$;
in particular Theorem~\ref{th:localsurv} holds in the inhomogeneous case for any generic
subset $\{n_k \colon k \in \N\}$ of occupied vertices (without restrictions).
On the other hand the results in the mortal case, since $p_n$ can be equal to $0$,  deal simultaneously
with the homogeneous case as well with the inhomogeneous case.


Let us discuss briefly the case where there are particles on the whole
 line $\mathbb Z$. When we say that the left (respectively right) process survives
globally  (respectively locally) we are talking about the process which involves just
the particle in the left (respectively right) side of the line (the origin is included).
Clearly if either the right or the left process survives (globally or locally)
then the whole process survives (globally or locally).
We consider the immortal particle case for simplicity 
and we sketch the differences with the mortal case.
Suppose that all the particles in the left (respectively~right) process
are activated then the 
probability of local survival is 1 or 0
depending on the divergence or convergence of the series
$\sum_{n} \min \big (1, \frac{l_n}{1-l_n} \big )$
(respectively~$\sum_{n} \min \big (1, \frac{1-l_n}{l_n} \big )$).

If the 
probabilities of local survival of both
the left and right processes are $0$, then there is local extinction for the whole
process as well. Indeed there might be cooperation between the particles in
two half lines in order to improve the activation process
but nothing can be done for local survival.
If either the left process or the right one can survive globally, then
there is a positive probability of local survival 
only if at least
one of the two process survives locally (once all the particle are activated).
%
%
Here we are not saying that one of the process survives locally by itself but
that it survives with positive probability once all its particles are activated (maybe by one
particle from the other side).
We observe that in the mortal particle case,
the local survival of the whole process is equivalent to the local survival
of one of the two half processes by itself.

If both processes cannot survive globally then there might still be global
survival; in order to survive globally it is sufficient (and necessary as
well) that an infinite number of particle from each 
side crosses the
origin and goes to the other
side.
In this case
global and local survival are equivalent.

Another question is what can be said in random environment, that is the case
where $\{l_n\}_{n \in \N}$ is a sequence of independent random variables
taking values in $(0,1)$ (also the sequence $\{p_n\}_{n \in \N}$ may
be randomly chosen).
The analysis of the random environment case exceeds the purpose
of this paper, nevertheless many results may be deduced from ours. Here is an example in the 
immortal particle case (with right drift).
A realization of the
\textit{environment} is a fixed realization $\{l_n(\omega)\}_{n \in \N}$.

\begin{teo}\label{th:localsurvRE}
Suppose that $\{l_n\}_{n \in \N}$ is a sequence of independent random variable such
that $\pr(l_n<1/2)=1$ for all $n \in \N$. Then either the probability of local survival
is $1$ for almost every realization of the environment or it is $0$ for almost every realization of the environment.\\
In particular we have the following sufficient conditions.
\begin{enumerate}
 \item If $\sum_{n \in \N} \pr(n(1/2-l_n)\le M)=+\infty$ for some $M$ then the probability of local survival
is $1$ for almost every realization of the environment.
\item If 
$\displaystyle \sum_{n \in \N} \pr \Big (\frac{n(1/2-l_n)}{\log(n)} >  \frac{1}{4- \log(n)/n} \Big )<+\infty $ 
then the probability of local survival
is $1$ for almost every realization of the environment.
\item If 
$\displaystyle \sum_{n \in \N} \pr \Big (\frac{n(1/2-l_n)}{\log(n)} <  
\frac{1+\beta \log(\log(n))/\log (n)}{4- 2(\log(n)+ \beta \log(\log(n)))/n} \Big ) <+\infty $ 
then the probability of local survival
is $0$ for almost every realization of the environment. 
\end{enumerate}
\end{teo}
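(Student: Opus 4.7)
\medskip

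\noindent\textbf{Proof proposal.} The plan is to argue the dichotomy via a tail-event / Kolmogorov $0$--$1$ argument applied to the characterizing series of Theorem~\ref{th:localsurv}(1), and then to derive the three sufficient conditions from Borel--Cantelli together with Corollary~\ref{cor:condition1}.

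First I would fix a realization $\omega$ of the environment and condition on it. Since $\pr(l_n<1/2)=1$ for all $n$, for almost every $\omega$ the sequence $\{l_n(\omega)\}_{n\in\N}$ satisfies the right-drift hypothesis, so Theorem~\ref{th:localsurv}(1) applies and, quenched on $\omega$, the probability of local survival equals $1$ or $0$ according to whether
\[
S(\omega):=\sum_{n\in\N}\left(\frac{l_n(\omega)}{1-l_n(\omega)}\right)^{n}
\]
diverges or converges. The annealed probability of local survival is therefore $\pr(S=+\infty)$. Because $\{l_n\}_{n\in\N}$ is independent and divergence of $S$ is unaffected by modifying any finite number of terms, the event $\{S=+\infty\}$ lies in the tail $\sigma$-algebra of $\{l_n\}_{n\in\N}$; by Kolmogorov's $0$--$1$ law $\pr(S=+\infty)\in\{0,1\}$, which gives the dichotomy.

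Next I would deduce the three sufficient conditions. For (1), the events $E_n:=\{n(1/2-l_n)\le M\}$ are independent, so the second Borel--Cantelli lemma gives $\pr(E_n \text{ i.o.})=1$ from $\sum_n \pr(E_n)=+\infty$. Hence for almost every environment $\liminf_n n(1/2-l_n(\omega))\le M<+\infty$ and Corollary~\ref{cor:condition1}(1) yields quenched local survival, which by the $0$--$1$ law must hold a.s.\ in $\omega$. For (2), let $F_n$ be the event that the inequality of Corollary~\ref{cor:condition1}(2) \emph{fails} at index $n$; the hypothesis $\sum_n\pr(F_n)<+\infty$ together with the first Borel--Cantelli lemma (no independence needed) implies that, for almost every environment, $F_n$ occurs only finitely often, so the inequality of Corollary~\ref{cor:condition1}(2) is satisfied for every sufficiently large $n$ and quenched local survival follows. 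The argument for (3) is identical: the first Borel--Cantelli lemma shows that the inequality of Corollary~\ref{cor:condition1}(3) (with the $\beta>1$ supplied by hypothesis) holds for all but finitely many $n$, giving quenched local extinction for a.e.\ environment.

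The only delicate point is the verification that $\{S=+\infty\}$ is truly a tail event of $\{l_n\}_{n\in\N}$, which is immediate since each summand depends on a single $l_n$ and convergence of a positive-term series is invariant under removal of finitely many terms; after that, the whole argument is a straightforward reduction of the random-environment statement to the deterministic criteria of Section~\ref{sec:immortal} via standard Borel--Cantelli bookkeeping. I do not expect any serious technical obstacle.
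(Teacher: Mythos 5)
Your proposal is correct and follows essentially the same route as the paper, which proves this theorem in one line as a consequence of Theorem~\ref{th:localsurv}, Corollary~\ref{cor:condition1} and the Borel--Cantelli Lemma: the quenched $0$--$1$ law plus the tail-event/Kolmogorov argument gives the dichotomy, the second Borel--Cantelli lemma (using independence) gives condition (1), and the first Borel--Cantelli lemma gives conditions (2) and (3). You even correctly observe, as the paper does, that independence is not needed for conditions (2) and (3).
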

This is a consequence of Theorem~\ref{th:localsurv}, Corollary~\ref{cor:condition1} and the Borel-Cantelli Lemma.
We note in particular that for conditions (2) and (3) we do not need the independence of $\{l_n\}_{n \in \N}$.

\section{Proofs}\label{sec:Proofs}
\addtocounter{section}{1}

\begin{proof}[Proof of Theorem \ref{th:localsurv}]
\begin{enumerate}
\item
Let $C_0$ be the event that the particle which starts at 0 visits all
vertices $n\ge1$: since $l_0<1/2$,
then $\Prob(B_\infty)=\Prob(C_0)=1$. Moreover, with respect to $\Prob(\cdot|C_0)$,
$\{A_n\}_{n\ge1}$ is an independent family of events;
$\Prob(A_n)=\Prob(A_n|B_n)=\Prob(A_n|C_0)$ for $n\ge1$.
Clearly in this case, $\{A_n\}_{n\ge1}$ is independent with respect to $\Prob$.
Thus
\[
\sum_{n\in \N}\Prob(A_n)=
\sum_{n\in \N}  \left(\frac{l_n}{1-l_n}\right)^n.
\]
The claim follows by the Borel-Cantelli Lemma.
\item
If all particles have a drift to the left, each particle visits 0 a.s.~only a finite number of
times. Hence in order to have local survival, we need to activate all particles. But infinite
activation is also a sufficient condition since starting at $n>0$ each particle visits 0
a.s.~at least once.
\end{enumerate}
\end{proof}

Before proving Corollary \ref{cor:condition1} we need a technical lemma. 

\begin{lem}\label{lem:sumexp}
Let $W=\sum_{i=1}^\infty (1-a_n)^n$ (where $a_n \in [0,1]$ for all $n \ge 1$) then
\begin{enumerate}
\item $\liminf_{n} na_n  < +\infty$ implies $W=+\infty$;
\item  if, for every sufficiently large $n$, $a_n  \le n^{-1}\log(n)$   then 
$W=+\infty$.
\item  if there exists $\beta > 1$ such that, for every sufficiently large $n$, $a_n  \ge n^{-1}\big (\log(n) + \beta \log(\log(n)) \big)$   then 
$W<+\infty$.

\end{enumerate}
\end{lem}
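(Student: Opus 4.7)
The overall plan is to reduce each of the three claims to a direct comparison of $(1-a_n)^n$ with a reference sequence of known summability behavior; the key analytic input is the elementary two-sided estimate
\[
-x - x^2 \;\le\; \log(1-x) \;\le\; -x, \qquad x \in [0, 1/2],
\]
the upper half being standard and the lower half verified by checking that $f(x):=\log(1-x)+x+x^2$ satisfies $f(0)=0$ and $f'(x)=x(1-2x)/(1-x)\ge 0$ on $[0,1/2]$. For part (1), I would extract a subsequence $\{n_k\}_{k \in \N}$ and a constant $C>0$ with $n_k a_{n_k}\le C$, and observe that
\[
(1-a_{n_k})^{n_k} \;\ge\; \Bigl(1 - \tfrac{C}{n_k}\Bigr)^{n_k} \xrightarrow[k \to \infty]{} e^{-C} > 0,
\]
so that infinitely many terms of $W$ are bounded below by $e^{-C}/2$, forcing $W=+\infty$.

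For part (2), the hypothesis $a_n \le \log(n)/n$ forces $a_n \to 0$, so the lower bound on $\log(1-x)$ applies for $n$ large. This gives
\[
n \log(1 - a_n) \;\ge\; -n a_n - n a_n^2 \;\ge\; -\log n - \frac{\log^2 n}{n},
\]
and exponentiating yields $(1-a_n)^n \ge \tfrac{1}{2n}$ eventually, so comparison with the harmonic series forces $W=+\infty$.

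For part (3), the upper bound on $\log(1-x)$ is enough:
\[
(1-a_n)^n \;\le\; e^{-n a_n} \;\le\; \exp\bigl(-\log n - \beta \log\log n\bigr) = \frac{1}{n (\log n)^\beta},
\]
a Bertrand series convergent for $\beta > 1$. The only point that requires any care is the bookkeeping in (2): one must ensure simultaneously that $a_n \le 1/2$ (so that the lower bound on $\log(1-x)$ applies) and that the correction term $\log^2(n)/n$ is negligible for large $n$. Once this threshold is chosen, each of the three parts reduces to a one-line comparison with a classical series.
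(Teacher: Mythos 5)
Your proof is correct, and parts (1) and (3) follow the paper's own argument essentially verbatim: the same subsequence extraction with $(1-C/n_k)^{n_k}\to e^{-C}$ for (1), and the same chain $(1-a_n)^n\le e^{-na_n}\le n^{-1}(\log n)^{-\beta}$ for (3). The only real divergence is in part (2). The paper first replaces $a_n$ by $q_n=\log(n)/n$ using monotonicity of $x\mapsto(1-x)^n$, then studies the function $f(x)=x\bigl(1-\log(x)/x\bigr)^x$, shows via a second-order expansion of $\log(1-\log(x)/x)$ that $f(x)\to 1^-$, and invokes continuity and compactness to get a uniform constant $c>0$ with $(1-q_n)^n\ge c/n$ for \emph{all} $n\ge1$. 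You instead apply the elementary two-sided bound $-x-x^2\le\log(1-x)\le -x$ on $[0,1/2]$ directly to $a_n$ and exponentiate, obtaining $(1-a_n)^n\ge 1/(2n)$ for all sufficiently large $n$. Your route is shorter and avoids both the big-$O$ bookkeeping and the compactness step; what the paper's version buys is the explicit uniform constant valid from $n=1$, though for the intended application (divergence of the series, including over subsets $J$ with $\sum_{n\in J}1/n=+\infty$, as used later in the proof of Theorem 3.6) an eventual bound of the form $c/n$ is just as good. Your closing remark about checking $a_n\le 1/2$ before invoking the lower logarithm bound is exactly the right point of care, and it is satisfied automatically since $a_n\le\log(n)/n\to0$.
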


\begin{proof}
\begin{enumerate}
\item Let $\{n_j\}_{j \in \N}$ and $M>0$ such that $a_{n_j} \le M/n_j$ for all $j \in \N$. Hence,
$(1-a_{n_j})^{n_j} \ge (1-M/n_j)^{n_j} \to e^{-M}$ as $j \to \infty$.
\item 
Since, eventually as $n \to \infty$, $(1-a_n)^n \ge (1-q_n)^n$ where $q_n=\log(n)/n$, it is enough to prove that 
$W^\prime := \sum_{n \ge 1} (1-q_n)^n =+\infty$. To this aim, we prove that there exists $c>0$ such that
$ (1-q_n)^n \ge c/n$ for all $n \ge 1$. Indeed let $f(x):= x (1-\log(x)/x)^x$ (for all $x\ge 1$); then
\[
 \begin{split}
f(x)&= \exp \Big (x \log \Big (1- \frac{\log(x)}{x} \Big ) +\log(x) \Big ) \\
& = 
\exp \Big (x \Big (- \frac{\log(x)}{x}- \frac{\log(x)^2}{2 x^2} + O \Big ( \Big |\frac{\log(x)}{x} \Big |^3 \Big )\Big ) +\log(x) \Big ) \\
& =\exp \Big (-\frac{\log(x)^2}{2x} \Big (1 + O \Big ( \Big |\frac{\log(x)}{x}\Big | \Big ) \Big )\Big ) \to 1^-
 \end{split}
\]
as $x \to \infty$
(where, as usual, given two functions $h$ and $f$, by $h=O(f)$ as $x \to x_0$ we mean
$\limsup_{x \to x_0} |h(x)/f(x)| <\infty$). 
Hence, by continuity and compactness, since $f(x)>0$ for all $x \ge 1$, there is $c>0$ such that
$f(x) \ge c$ for all $x \ge 1$.
\item 
Take $\beta>1$ such that $a_n  \ge n^{-1}\big (\log(n) + \beta \log(\log(n)) \big)=:q_n$ for every sufficiently large $n$, then
\[
 \begin{split}
(1-a_n)^n & \le  (1- q_n )^n \le  \big ( (1- q_n )^{1/q_n} \big )^{nq_n} \le \exp{(-n q_n)} \\
&\le \exp{\big (-\log(n) - \beta \log(\log(n)) \big )}=\frac{1}{n \log(n)^\beta}
\end{split}
\]
hence $W<+\infty$.
\end{enumerate}
 
The previous proof implies in particular that if, for every sufficiently large $n$, $a_n  \le \log(n) /n$   then 
$\sum_{i \in J}^\infty (1-a_n)^n=+\infty$ for every $J \subseteq \N \setminus \{0\}$ such that $\sum_{i \in J}^\infty 1/n=+\infty$.

\begin{proof}[Proof of Corollary \ref{cor:condition1}]
Rewrite $l_n/(1-l_n)$ as $1-a_n$ where  $a_n:= (1-2l_n)/(1-l_n)$.
\begin{enumerate}[(1)]
 \item  We note that $\liminf_n n(1/2-l_n)< \infty$ is equivalent to
$\liminf_n n a_n< \infty$ (since in both cases $\lim_{n \to \infty} l_n =1/2$),
thus according to Lemma~\ref{lem:sumexp}(1)
$\sum_{n=1}^\infty  \left( l_n/(1-l_n) \right)^n$ diverges.
Theorem~\ref{th:localsurv} yields the result.
 \item 
Since, $n(1/2-l_n)/\log(n)\le  {1}/{(4- \log(n)/n)}$ is equivalent to
$a_n \le  \log(n) /n$, then, 
 according to Lemma~\ref{lem:sumexp}(2),
$\sum_{n=1}^\infty  \left( l_n/(1-l_n) \right)^n$ diverges and
Theorem~\ref{th:localsurv} yields the result.
\item  
 We note that the inequality 
 $n(1/2-l_n)/\log(n)\ge  (1+\beta \log(\log(n))/\log(n))/(4- 2(\log(n)+ \beta \log(\log(n)))/n)$
is equivalent to
 $
a_n \ge \log(n)/n+\beta \log(\log(n))/n$, thus, 
  according to Lemma~\ref{lem:sumexp}(3), the series
$\sum_{n=1}^\infty  \left( l_n/(1-l_n) \right)^n$ converges and
Theorem~\ref{th:localsurv} yields the result.
\item Note that $(1-\frac{1-2l_{n}}{1-l_{n}})^{\frac{1-l_{n}}{1-2l_{n}}} \le {1/e}$. Then
\[
\left (\frac{l_{n}}{1-l_{n}} \right )^{n} \le
\exp \left (-\frac{2n (1/2-l_{n})}{1-l_{n}} \right ).
\]
We divide the sum into two 
series
\[
\sum_n \left (\frac{l_{n}}{1-l_{n}}\right )^{n} \le
\sum_{n: {2/(1-l_n)} \le \lambda} \left (\frac{l_{n}}{1-l_{n}}\right )^{n} +
\sum_{n: {2/(1-l_n)} > \lambda} \exp\left  (-\frac{2n (1/2-l_{n})}{1-l_{n}}\right ).
\]
The second series is finite since each summand  is
bounded from above by $\exp(-\lambda n ({1/2}-l_n))$.
The first series is finite since ${2/(1-l_n)} \le \lambda$
implies $l_n/(1-l_n) \le \lambda/2 -1 <1$.
Again, Theorem~\ref{th:localsurv} yields the conclusion.
\end{enumerate}

\end{proof}

\begin{proof}[Proof of Proposition~\ref{pro:rumor}]
The proof is based on a comparison between our frog model and the heterogeneous firework process introduced
in  \cite{cf:JMZ}.
Consider the explicit construction of the frog model given in Section~\ref{sec:intro}. 
Define a family of independent random variables $\{R_i\}_{i \in \N}$ as
$R_i := \max\{Z^i_n \colon n \in \N \}$, that is, the maximum right excursion of the independent walker
$\{Z^i_n\}_{n \in \N}$. Conditioned on the activation of the $i$-th walker, that is $T_i<\infty$, 
$R_i$ is also the maximum right excursion of the
dependent walker $\{X^i_n\}_{n \in \N}$ of our frog model.
Let $\{R_i\}_{i \in \N}$ be the radii of the firework process.
In the firework process sites are activated as follows:
at time $0$, site $0$ sends a signal and activates all the sites to its right up to a distance $R_0$. 
Iteratively, if site $i$ is activated, it sends a signal which will activate all the inactive sites, if any,
in the interval $(i, i+R_i]$.
We prove that site $i$ is activated in the firework process if and only if the $i$-th walker is activated
in our frog model. When $i=0$ there is nothing to prove. Suppose it holds for all $i \le N$.
The site $N+1$ is activated if and only if all the sites between $0$ and $N$ are activated and $N+1 \le R_i+i$
for some $i \le N$. By induction, this is equivalent to the event ``all the walkers starting between $0$ and $N$ are activated
and, for some $i \le N$, the $i$-th walker reaches $N+1$'', that is, the $N+1$-th walker is activated.

Following the proof of  \cite[Proposition 3.3]{cf:JMZ} (in that paper $B_n$ is denoted by $V_n$), we note that 
\[B_{n_{k+1}} \supseteq B_{n_k} \cap \Big ( \bigcup_{i=0}^{n_k} (R_i \ge n_{k+1}-i) \Big ).\] 
By basic random walk theory $\pr(R_i \ge k)=(1-l_i)^k/l_i^k$.
Using the FKG inequality and the independence of the  $\{R_i\}_{i \in \N}$ we have that
\[
 \pr(B_{n_{k+1}}) \ge \pr(B_{n_{k}}) \Big ( 1- \prod_{i=0}^{n_k} \Big (
1 - \Big ( \frac{1-l_i}{l_i}\Big )^{n_{k+1}-i}
\Big )\Big )
\]
hence the probability of  survival  of the firework process (and of our frog model as well) is bounded from below by
\[
 \prod_{k=0}^\infty 
\Big ( 1- \prod_{i=0}^{n_k} \Big (
1 - \Big ( \frac{1-l_i}{l_i}\Big )^{n_{k+1}-i}
\Big )\Big ).
\]
According to Lemma~\ref{lem:test0}(2), the previous product is strictly positive if and only if
\[
  \sum_{k=0}^\infty  \prod_{i=0}^{n_k} \Big (
1 - \Big ( \frac{1-l_i}{l_i}\Big )^{n_{k+1}-i}
\Big ) < +\infty.
\]
\end{proof}

\begin{proof}[Proof of Theorem~\ref{pro:blocks}]
 The proof can be easily adapted from the proof of Theorem~\ref{excor:geomlocalsurvival}.
\end{proof}

\begin{proof}[Proof of Proposition \ref{pro:RWapproach}]
Note that, since $l_n> 1/2$ for all $n \in \mathbb{N}$, then
$\liminf_n l_n > 1/2$
is equivalent to $\inf_n l_n > 1/2$.
We associate to the process a random walk on a subset of $\N \times \N$.
To this aim we define the generation $0$ as the set containing only the initial active particle and, recursively,
the generation $n+1$ as the set of vertices visited by at least one particle of generation $n$.
We denote by $j_{n+1}$ the rightmost position reached by a particle of a generation $i \le n$.
Hence the generation $n$ is nonempty if and only if $j_n >j_{n-1}$, in this case it contains
all the particles starting in the set of positions $\{j_{n-1}+1, \ldots, j_n\}$.
It is clear that if the $n$-th generation is empty then all generations $m \ge n$ are empty as well.
The system survives locally if and only if all the particles are activated, that is,
if and only if every generation contains at least one particle.

As a warm-up we
 start with the simpler case of an homogeneous system: $l_n=l> 1/2$ for every $n$.
We associate to this process the random walk $\{\Delta_n\}_n$ which counts the particles of the
generation $n$, which is $\Delta_n=j_n-j_{n-1}$. The origin is the only absorbing state of this
Markov chain. It is easy to compute the probability of absorption (or local extinction)
\[
\begin{split}
 \Prob(\Delta_n=0 | \Delta_{n-1}=h) &= \left ( 1 - \frac{1-l}{l} \right )\left ( 1 - \left (\frac{1-l}{l} \right )^2 \right )
\cdots \left ( 1 - \left ( \frac{1-l}{l} \right )^h \right ) \\
 & \ge \prod_{i=1}^\infty \left ( 1 - \left (\frac{1-l}{l} \right )^i \right )
\end{split}
\]
which is strictly positive according to Lemma~\ref{lem:test0}.
This implies, in particular, that the Markov chain $\{\Delta_n\}_n$ is absorbed
in $0$ a.s., whence $\Prob(B_\infty)=0$.

In the general case of an inhomogeneous system, $\{\Delta_n\}_n$ is no longer
a Markov process. In order to be able to mimic the steps above, we must consider
the Markov chain $\{(\Delta_n, j_n)\}_n$.
In this case
\[
\begin{split}
 \Prob \big (\Delta_n=0 | (\Delta_{n-1}, j_{n-1})=(h,k) \big )
& = \prod_{i=k-h+1}^k \left ( 1 - \left (\frac{1-l_i}{l_i} \right )^{k-i+1} \right ) \\
& \ge \inf_{h,k \in \N\colon  h \le k} \prod_{i=k-h+1}^k \left ( 1 - \left (\frac{1-l_i}{l_i} \right )^{k-i+1} \right ) \\
& = \inf_{k \in \N} \prod_{i=1}^k \left ( 1 - \left (\frac{1-l_i}{l_i} \right )^{k-i+1} \right ).
\end{split}
\]
%
%
Note that $\inf_{k \in \N} \prod_{i=1}^k \left ( 1 - \left (\frac{1-l_i}{l_i} \right )^{k-i+1} \right )>0$ is
equivalent to $\inf_{i \in \N} l_i > 1/2$ and implies $\Prob(B_\infty)=0$.
%
%
\end{proof}

\begin{proof}[Proof of Proposition~\ref{excor:geomglobalextinction}]
Suppose that $\sup_n p_n=p<1$ and that $n$ dormient particles are activated in $n$ consecutive vertices, say $i, i+1, \ldots, i+n-1$.
The probability that the lifespan of all these particles is so short that neither of them can possibly reach
the vertex $i+n$ (and activate more particles) is
\[
 \prod_{j=0}^{n-1} (1-p_{i+j}^{n-j}) \ge \prod_{j=1}^{\infty} (1-p^{j})>0, \qquad \forall n \in \N.
\]
As in 
the proof of Proposition~\ref{pro:RWapproach},
$\pr \big (\Delta_n=0 | (\Delta_{n-1},j_{n-1})=(n,i+n-1) \big ) \ge \prod_{j=1}^{\infty} (1-p^{j})>0$
and the conclusion follows.
\end{proof}

\begin{proof}[Proof of Theorem~\ref{excor:geomglobalsurvival}]
Define $L_1:= \sup_{n \in \N}(\max(\mathcal{B}_{n+1})-\min(\mathcal{B}_{n}))$.
We note that the series 
$\sum_{n \in \N} \sum_{k \in \mathcal{B}_n} \sqrt{1-p_n}$ 
is convergent, hence 
we have that
in all but a finite number (say $N_0$) 
of blocks at least one particle has a strictly positive lifetime parameter $p_n$.
Since there is always a positive probability that the particle at $0$ reaches
$\max(\mathcal{B}_{N_0+1})$, then we can assume without loss of generality that in every block $\mathcal{B}_n$ there is
at least one particle with strictly positive lifetime parameter.

Consider the (mortal) random walk with $p(j,j-1)=p_nl_n$, $p(j,j+1)=p_n(1-l_n)$,
$p(j,D)=1-p_n$ for all $j \in \Z$, $p(D,D)=1$ ($D$ represents the absorbing state where the particle is 
considered dead).
Define
\[
f_n^{(k)}(x,y)=\Prob(\text{the $n$-th RW visits }y\text{ for the first time at time }k+h|\text{the RW is at }x \text{ at time } h).
\]
Let $F_n(x,y|z)=\sum_k f_n^{(k)}(x,y)z^k$. Then
\begin{equation}\label{eq:recurrenceforF}
F_n(x-1,x|z)=p_n (1-l_n) z+p_nl_nzF_n(x-1,x+1|z).
\end{equation}
Noting that $F_n(x-1,x+1|z)=(F_n(x-1,x|z))^2$ we get 
\begin{equation}\label{eq:1stepright}
F_n(x-1,x|z)=\frac{1-\sqrt{1-4z^2p_n^2l_n(1-l_n)}}{2zp_nl_n}=\frac{2zp_{n} (1-l_{n})}{1+\sqrt{1-4z^2p_{n}^2l_{n}(1-l_{n})}}.
\end{equation}
Hence the probability for a mortal particle starting from $j_n$ to ever reach $j_{n+1}$
is $F_{j_n}(x-1,x|1)^{j_{n+1}-j_n}$.

The probability that,
in each block $\mathcal{B}_n$ such that $\max(\mathcal{B}_{n+1})>\max(\mathcal{B}_n)$, 
there exists at least one particle which visits all the sites of the following
block is bounded from below by the probability that in every block at least one particle has a right excursion larger than
$L_1$, that is
 \begin{equation}\label{eq:product0}
 \prod_{n=1}^\infty
\left ( 1-\prod_{k \in \mathcal{B}_n}
\left ( 1-
\left(\frac{1-\sqrt{1-4p_k^2l_k(1-l_k)}}{2p_kl_k}
\right)^{L_1} \right )
\right).
\end{equation}

By Lemma~\ref{lem:test0} a sufficient condition for the positivity of the product in equation~\eqref{eq:product0}
is
\begin{equation}\label{eq:suffcondprod0}
\sum_{n \in \N}
\prod_{k \in \mathcal{B}_n}
\left ( 1-
\left(\frac{1-\sqrt{1-4p_k^2l_k(1-l_k)}}{2p_kl_k}
\right)^{L_1} \right )
< +\infty;
\end{equation}
the fact that in each block there is at least one particle, say at $k$, with $p_k >0$
implies that each term in the infinite product \eqref{eq:product0} is strictly positive and
Lemma~\ref{lem:test0} applies.
Since $1-x^n \le n(1-x)$ (for all $n \in \N$) and by using the following estimates
\[
\begin{split}
 0 &\le 1-
\frac{2p_{k} (1-l_{k})}{1+\sqrt{1-4p_{k}^2l_{k}(1-l_{k})}}
= \frac{1+\sqrt{(2p_kl_k-1)^2 +4p_{k}(1-p_k)l_{k}}-2p_{k} (1-l_{k})}{1+\sqrt{1-4p_{k}^2l_{k}(1-l_{k})}} \\
& \le 1-2p_k(1-l_k)+2\sqrt{1-p_k} + |2p_kl_k-1|
\le W_k:=
\begin{cases}
\displaystyle 2(1-p_k)+2\sqrt{1-p_k} & \text{if } p_k l_k \le 1/2 \\
\displaystyle 4p_k(l_k-1/2)+ 2\sqrt{1-p_k}  & \text{if } p_k l_k > 1/2, \\
\end{cases} \\
\end{split}
\]
we have that equation~\eqref{eq:suffcondprod0} is implied by
 $\sum_{n \in \N} \prod_{k \in \mathcal{B}_n} W_k <+\infty$
which, in turn, is implied by
$\sum_{n \in \N}  \prod_{k \in \mathcal{B}_n} S_k <+\infty$
where
\[
  S_k :=
\begin{cases}
 \sqrt{1-p_k} & \text{if }p_kl_k \le 1/2 \\
\sqrt{1-p_k} + 2p_k(l_k-1/2) & \text{if }p_kl_k > 1/2 \\
\end{cases}
\]
since 
$ \prod_{k \in \mathcal{B}_n} W_k \le 4^L  \prod_{k \in \mathcal{B}_n} S_k$.

By using the inequality between arithmetic and geometric means we have that
%
%
%
%
$\sum_{n\in \N} \prod_{k \in \mathcal{B}_n} S_k \le \frac{1}{L}
\sum_{n \in \N} \sum_{k \in \mathcal{B}_n}  S_{k}^L$.
Hence $\sum_{n \in \N} \sum_{k \in \mathcal{B}_n} S_k^L<+\infty$
implies global survival.
Using, on the one hand, the Minkowski inequality and, on the other, the fact that $S_k$ is the sum
of the two nonnegative functions
$\sqrt{1-p_k}$ and $2p_k(l_k-1/2) \ident_{(0,+\infty)}(l_k-1/2)$ , we have that
$\sum_{n \in \N} \sum_{k \in \mathcal{B}_n} S_k^L<+\infty$ is equivalent to
$\sum_{n \in \N} \sum_{k \in \mathcal{B}_n} (1-p_k)^{L/2} < +\infty$, $\sum_{n=0}^\infty 
\sum_{k \in \mathcal{B}_n\colon p_k l_k >1/2} (l_k-1/2)^L < +\infty$
(since,  in both cases, $p_k \to 1$ as $k \to \infty$, $k \in \bigcup_{n \in \N}\mathcal{B}_n$).
\end{proof}

%

\begin{proof}[Proof of Theorem~\ref{pro:geomlocalextinction}]
We note that in this case, by switching $l_n$ and $1-l_n$ in equation~\eqref{eq:1stepright}
\begin{equation}\label{eq:1stepleft}
\Prob(A_n|B_n)=\left(
\frac{1-\sqrt{1-4p_n^2l_n(1-l_n)}}{2p_n(1-l_n)}
\right)^n
=\left( \frac{2p_n l_n}{1+\sqrt{1-4p_n^2l_n(1-l_n)}} \right)^n.
\end{equation}
Now, since $A_n\subset B_n$, $\Prob(A_n)\le\Prob(A_n|B_n)$ and
and by the Borel-Cantelli Lemma we have that $\sum_n \Prob(A_n|B_n)<+\infty$ implies
$\Prob(A_i\text{ i.o.})=0$.

\begin{enumerate}
\item 
Note that it is enough to prove the result in the case $\Delta_n \to 0$ and $\delta_n \to 0$. The result in the general case follows 
from a coupling between the process and a similar one with $p^\prime_n=1-\Delta^\prime_n$ and $l^\prime_n=1-\delta_n^\prime$
such that 
$\Delta_n \ge \Delta^\prime_n \to 0$, $\delta^\prime_n=\delta_n$ if $\delta_n <0$, 
$0 \le \delta^\prime_n \le \delta_n$ if $\delta_n\ge 0$,
$\delta_n^\prime \to 0$
 and $\liminf_{n\to \infty} n \big (2\delta^\prime_n+\sqrt{2\Delta^\prime_n+4{\delta_n^\prime}^2} \big )/\log(n)>1$. 
Hence from now on we suppose that $\Delta_n \to 0$ and $\delta_n \to 0$ as $n \to \infty$.

In order to check whether  the series $\sum_n \Prob(A_n|B_n)$ is convergent, we use 
 Lemma~\ref{lem:sumexp}, hence it is important to estimate $a_n=1-(1-\sqrt{1-4p_n^2l_n(1-l_n)})/(2p_n(1-l_n))$. 
To this aim we write
 \begin{equation}\label{eq:asym}
\begin{split} 
\pr(A_n|B_n)=
&=\frac{(1-\Delta_n)(1-2 \delta_n)}{1+\sqrt{1-(1-\Delta_n)^2 (1-4\delta_n^2)}}
 \\
 &= (1-\Delta_n)(1-2 \delta_n) \Big (1-\sum_{j=1}^\infty (-1)^{j+1} \big (1-(1-\Delta_n)^2 (1-4\delta_n^2) \big )^{j/2} \Big )\\
&= (1-\Delta_n)(1-2 \delta_n) \Big (1-\sqrt{2\Delta_n+4\delta_n^2} +o \Big (\sqrt{\Delta_n+2\delta_n^2} \Big ) \Big )\\
&=1-(2\delta_n+\sqrt{2\Delta_n+4\delta_n^2}) +
 o \Big (\sqrt{2\Delta_n+4\delta_n^2}\Big ) \\
\end{split}
 \end{equation}
as $\Delta_n \to 0$ and $\delta_n \to 0$
(where, as usual, given two functions $h$ and $f$, by $h=o(f)$ as $x \to x_0$ we mean that $\lim_{x \to x_0} h(x)/f(x)=0$).
If $\liminf_{n \to \infty} n \big (2\delta_n+\sqrt{2\Delta_n+4\delta_n^2} \big )/\log(n)>1$, 
then $\liminf_{n \to \infty} n \big (2\delta_n+\sqrt{2\Delta_n+4\delta_n^2}+
 o \big (\sqrt{2\Delta_n+4\delta_n^2}\big ) \big )/\log(n)>1$.
Thus, according to Lemma~\ref{lem:sumexp}(3), the series $\sum_n \Prob(A_n|B_n)$ converges.

\item
Given the following estimate 
\[
\Prob(A_n|B_n) = \frac{2p_n l_n}{1+\sqrt{1-4p_n^2l_n(1-l_n)}}  \le
\min( p_n, 2p_nl_n)=p_n(1-(1-2l_n)^+),
\]
we have that $\sum_n p_n^n(1-(1-2l_n)^+)^n<+\infty$ implies $\sum_n \Prob(A_n|B_n)<+\infty$.
\item
It follows from $\Prob(A_n|B_n) \le p_n$ for all $n \in \N$.

\end{enumerate}

\end{proof}

\begin{proof}[Proof of Theorem~\ref{excor:geomlocalsurvival}]
Consider the following conditions:
\begin{equation}\label{eq:better1}
\begin{cases}
\displaystyle  
\sum_{{\scriptscriptstyle k \in \bigcup_{n \in \N}  \mathcal{B}_{2n}\colon \scriptscriptstyle p_{k} l_{k} >1/2}}
(l_{k}-1/2)^{L} +\sum_{k \in  \bigcup_{n \in \N}  \mathcal{B}_{2n}} (1-p_{k})^{L/2}  < +\infty \\
\sum_{k \in \mathcal{O}}  \left(\frac{1-\sqrt{1-4p_k^2l_k(1-l_k)}}{2p_k(1-l_k)}
\right)^k=+\infty \\
\end{cases}
\end{equation}
The first line of the previous equation, according to Theorem~\ref{excor:geomglobalsurvival},
implies global survival and the activation of every particle. 
The divergence of the series in the second line implies that, once activated, an infinite number of particles
visits the origin (whence local survival).

We are left to prove that each of the three conditions implies that 
the series in the second line of the previous 
equation is divergent.

\begin{enumerate}
 \item 
Given a sequence $\{k_j\}_{j \in \N}$ in $\mathcal{O}$ such that 
$\lim_{j \to \infty} k_j \Big (2\delta_{k_j}+\sqrt{2\Delta_{k_j}+4\delta_{k_j}^2}\Big )<\infty$, then
$\Delta_{k_j} \to 0$ and $\delta_{k_j} \to 0$ as $j \to \infty$. Elementary but tedious computations show that,
the asymptotic estimate  of
$a_{k_j}=1-(1-\sqrt{1-4p_{k_j}^2l_{k_j}(1-l_{k_j})})/(2p_{k_j}(1-l_{k_j}))$ 
given 
by equation~\eqref{eq:asym}, can be refined obtaining
\[
\begin{split} 
\frac{1-\sqrt{1-4p_{k_j}^2l_{k_j}(1-l_{k_j})}}{2p_{k_j}(1-l_{k_j})} 
& \ge 1-(2\delta_{k_j}+\sqrt{2\Delta_{k_j}+4\delta_{k_j}^2}).
\end{split}
 \]
Thus $\lim_{j \to \infty} k_j a_{k_j}<\infty$
hence, according to Lemma~\ref{lem:sumexp}(1),
the series in the second line of equation~\eqref{eq:better1} is divergent.
\item
In this case $\Delta_k \to 0$ and $\delta_k \to 0$ as $k \to \infty$, $k \in \mathcal{O}$.
The conclusion follows from Lemma~\ref{lem:sumexp}(2) as before, since
$ 
\displaystyle a_k  \le 2\delta_k+\sqrt{2\Delta_k+4\delta_k^2} \le \log(k)/k$,
for all sufficiently large $k \in \mathcal{O}$.
To be precise, since the sum in the second line of equation~\eqref{eq:better1} 
spans in $\mathcal{O}$ we cannot  apply
Lemma~\ref{lem:sumexp}(2) as it is. Nevertheless the same argument of the proof of Lemma~\ref{lem:sumexp}(2) 
holds in this case as well; in particular, under our hypotheses, the $k$-th summand of the series in the second
line of equation~\eqref{eq:better1}
is
bounded from below by $c/k$ for some $c>0$ and, since $\sup_{n \in \N} (\max(\mathcal{B}_{2n+2})-\min(\mathcal{B}_{2n}))<+\infty$,
$\sum_{k \in \mathcal{O}} 1/k =+\infty$ (see the discussion after the proof of the Lemma).

\item
As in the proof of Theorem~\ref{excor:geomglobalsurvival}
we assume, without loss of generality, that in every block $\mathcal{B}_n$ there is
at least one particle with strictly positive lifetime parameter.

If we prove that 
\begin{equation}\label{eq:suffcondprod}
\displaystyle
\sum_{n \in \N} \prod_{k\in \mathcal{B}_{2n+1}}
\left ( 1-
\left(\frac{1-\sqrt{1-4p_k^2l_k(1-l_k)}}{2p_k(1-l_k)}
\right)^{k} \right ) < +\infty
\end{equation}
then we have that
\[
 \prod_{k\in \mathcal{O}}
\left ( 1-
\left(\frac{1-\sqrt{1-4p_k^2l_k(1-l_k)}}{2p_k(1-l_k)}
\right)^{k} \right ) =0,
\]
which, by Lemma~\ref{lem:test0}, is equivalent to the second line of equation~\eqref{eq:better1}.

Since $(1-x)^k \le k(1-x)$,
by using the following estimate
\[
  0 \le 1-
\frac{2p_{n} l_{n}}{1+\sqrt{1-4p_{n}^2l_{n}(1-l_{n})}}
\le \widetilde W_n:=
\begin{cases}
\displaystyle 2(1-p_n)+2\sqrt{1-p_n} & \text{if } p_n (1-l_n) \le 1/2 \\
\displaystyle 4p_n(1/2-l_n)+ 2\sqrt{1-p_n}  & \text{if } p_n (1-l_n) > 1/2. \\
\end{cases} 
\]
we have that equation~\eqref{eq:suffcondprod} is implied by
$\sum_{n \in \N} \prod_{k\in \mathcal{B}_{2n+1}} k \widetilde W_k <+\infty$
which, in turn, is implied by
$\sum_{n \in \N} \prod_{k\in \mathcal{B}_{2n+1}} k \widetilde S_k <+\infty$
where
\[
\widetilde S_n:=
\begin{cases}
\displaystyle \sqrt{1-p_n} & \text{if } p_n (1-l_n) \le 1/2 \\
\displaystyle 2p_n(1/2-l_n)+ \sqrt{1-p_n}  & \text{if } p_n (1-l_n) > 1/2. \\
\end{cases} 
\]
since $\prod_{k\in \mathcal{B}_{2n+1}} k\widetilde W_k \le 4^L \prod_{k\in \mathcal{B}_{2n+1}} k\widetilde S_k$.
The conclusion follows using the inequality between arithmetic and geometric means and the Minkowski inequality 
as in the proof of Theorem~\ref{excor:geomglobalsurvival}.

\end{enumerate}
\end{proof}

\begin{proof}[Proof of Corollary~\ref{cor:power}]

If $L> \max(2/\beta, 1/\alpha)$ then $\sum_{n \in \N}(\Delta^{L/2}+|\delta_n|^L) <\infty$ hence, 
according to Theorem~\ref{excor:geomglobalsurvival}, for all $\alpha, \beta >0$ there is
global survival in both cases.

Define $r_n:= 2\delta_n +\sqrt{2\Delta_n+4\delta_n^2} \equiv  2\Delta_n/(\sqrt{2\Delta_n+4\delta_n^2}-2\delta_n)$,
where, using the same notation of Theorems~\ref{pro:geomlocalextinction} and~\ref{excor:geomlocalsurvival},
$p_n=1-\Delta_n$ and $l_n=1/2-\delta_n$.
\begin{enumerate}
 \item 
 Suppose that $\Delta_n\sim 1/n^\beta$ and $\delta_n \sim -1/n^\alpha$.
Elementary computations show that, as $n \to \infty$,
\[
 r_n \sim
\begin{cases}
C_{\alpha,\beta} \sqrt{\Delta_n} \sim \frac{C_{\alpha,\beta}}{n^{\beta/2}} & \textrm{if } \beta \le 2\alpha \\
 C_{\alpha,\beta} \frac{\Delta_n}{\delta_n} \sim \frac{C_{\alpha,\beta}}{n^{\beta -\alpha}} & \textrm{if } \beta 
> 2\alpha \\
\end{cases}
\quad
\textrm{where}
\quad
C_{\alpha,\beta}=
\begin{cases}
\sqrt{2} & \textrm{if } \beta < 2\alpha \\
\sqrt{6}-2 & \textrm{if } \beta = 2\alpha \\
\frac12 & \textrm{if } \beta > 2\alpha \\
\end{cases}
\]
thus
\[
 \lim_{n \to \infty}\frac{n r_n}{\log(n)}
=
\begin{cases}
0 & \textrm{if } \{\beta \le 2\alpha, \beta \ge 2\} \cup \{\beta 
>2 \alpha, \beta \ge 1+\alpha\} \equiv \{\beta \ge \min(2, 1+\alpha)\} \\
+\infty &\textrm{if }   \{\beta \le 2\alpha, \beta < 2\} \cup \{\beta 
>2 \alpha, \beta < 1+\alpha\}  \equiv \{\beta < \min(2, 1+\alpha)\} \\ 
\end{cases}
\]
\item
Suppose that $\Delta_n\sim 1/n^\beta$ and $\delta_n \sim 1/n^\alpha$.
Since
$ \max(4\delta_n,\sqrt{2\Delta_n}) \le r_n \le 4\delta_n+\sqrt{2\Delta_n}$
we have that, for every $\alpha, \beta>0$ there exists $\varepsilon \in (0,1)$ such that 
\[
(1-\varepsilon) \frac{n}{\log(n)} 
\max \Big (\frac{4}{n^\alpha},\sqrt{\frac2{n^\beta}}\Big ) \le \frac{n r_n}{\log(n)} \le 
 (1+\varepsilon)\frac{n}{\log(n)}\Big (\frac{4}{n^\alpha}+\sqrt{\frac2{n^\beta}} \Big) 
\]
for every sufficiently large $n\in \N$.
Whence
\[
 \lim_{n \to \infty}\frac{n r_n}{\log(n)}
=
\begin{cases}
0 & \textrm{if } \{\beta \ge 2, \alpha \ge 1\} \\
+\infty &\textrm{if }   \{\beta < 2\} \cup \{\alpha <1\}. \\ 
\end{cases}
\]
The conclusion follows as before.
\end{enumerate}

\end{proof}

%
%

\begin{lem}
\label{lem:test0}
 Let $\{\alpha_i\}_{i \in \N}$ and $\{k_i\}_{i\in \N}$ be such that $\alpha_i \in (-\infty, 1)$ and
$k_i \ge 0$ for all $i \in \N$.
\begin{enumerate}
 \item \[
        \sum_{i \in \N} k_i \alpha_i < +\infty \Longleftarrow \prod_{i \in \N} (1-\alpha_i)^{k_i}>0;
       \]
 \item moreover if $\alpha_i \in [0, 1)$ and $k_i \ge 1$ eventually as $i \to \infty$ then
\[
        \sum_{i \in \N} k_i \alpha_i < +\infty \Longleftrightarrow \prod_{i \in \N} (1-\alpha_i)^{k_i}>0;
       \]
\item If $\alpha_i(j) \in [0, 1-\epsilon]$ (for some $\epsilon>0$) and $k_i(j) \ge 1$ for all $i,j \in \N$ then
\[
        \sup_{j \in \N} \sum_{i \in \N} k_i(j) \alpha_i(j) < +\infty \Longleftrightarrow \inf_{j \in \N}
\prod_{i \in \N} (1-\alpha_i(j))^{k_i(j)}>0.
       \]
\end{enumerate}
\end{lem}

\begin{proof} Clearly $\prod_{i \in \N} (1-\alpha_i)^{k_i}>0$ if and only if $\sum_{i \in \N} k_i \log(1-\alpha_i)>-\infty$.
\begin{enumerate}
 \item
Observe that $\log(1-x) \le -x$ for all $x < 1$ hence
\begin{equation}
\label{eq:test0}
  \sum_{i \in \N} k_i \alpha_i \le -\sum_{i \in \N} k_i \log(1-\alpha_i)<\infty.
\end{equation}
 \item
In this case, since $k_i \ge 1$ both sides imply $\alpha_i \to 0$. Thus $\log(1-\alpha_i) \sim -\alpha_i$ and
\[
 \sum_{i \in \N} k_i \log(1-\alpha_i) > -\infty \Longleftrightarrow \sum_{i \in \N} k_i \alpha_i < \infty.
\]
\item
If $\inf_{j \in \N} \prod_{i \in \N} (1-\alpha_i(j))^{k_i(j)}>0$ then using the first inequality in equation~\eqref{eq:test0}
we obtain $\sup_{j \in \N} \sum_{i \in \N} k_i(j) \alpha_i(j) < +\infty$.
Conversely, it suffices to note that there exists $\delta \in(0,1)$ such that $-\delta \alpha_i(j) \le \log(1-\alpha_i)$.
\end{enumerate}

%
%
%
%
%

\end{proof}

\begin{lem}\label{lem:test}
Let $\{\alpha_i\}_i$ be a sequence of nonnegative numbers. Define
$\bar \alpha_n:=\min\{\alpha_i\colon i \le n\}$;
the following are equivalent:
\begin{enumerate}
\item
there exists
an increasing sequence $\{n_i\}$ such that
  $\sum_{i}(n_{i+1}-n_i)\alpha_{n_i} < +\infty$;
\item
either $\alpha_n=0$ for infinitely many $n \in\N$ or it is possible to define recursively
an infinite, increasing sequence $\{r_j\}_j$ by
\begin{equation}\label{eq:test}
 \begin{cases}
  r_0 :=\min \{n\colon \alpha_m>0, \, \forall m \ge n\} \ \\
r_{n+1}=\min \{i>r_n\colon  \alpha_i < \alpha_{r_n} \}
 \end{cases}
\end{equation}
and
$\sum_{i}(r_{i+1}-r_i)\alpha_{r_i} < +\infty$;
\end{enumerate}
Moreover if $\alpha_i>0$ for all $i \in \N$ then the previous assertions are equivalent to
\begin{itemize}
\item[\textit{(3)}] $\sum_{i} \bar \alpha_{i} < +\infty$.
\end{itemize}

\end{lem}

%
%
%
%

\begin{proof}

$(1) \Longrightarrow (2)$.
Suppose that  $(1)$  holds and there exists $r_0 \in \N$ such that, for all $n \ge r_0$ we have
$\alpha_n>0$. Suppose that $r_0=0$ (the proof in the general case follows easily from this particular case).
Then $\lim_i \alpha_{n_i}= 0$ and, since $\alpha_{n_i}>0$ for all $i \in \N$, we have that
for all $j \in \N$ the set $\{i\colon  \alpha_{n_i} < \alpha_j\} \not = \emptyset$  and it is possible to define recursively
the sequence $\{r_n\}$.
Clearly we have
\begin{equation}\label{eq:test2}
 \alpha_i \ge \alpha_{r_n}, \qquad \forall  i <r_{n+1}.
\end{equation}
We show now that for all increasing sequences $\{n_i\}_i$ we have
 \[
  \sum_{i}(n_{i+1}-n_i)\alpha_{n_i} \ge
  \sum_{i}(r_{i+1}-r_i)\alpha_{r_i}
 \]
which implies easily $(2)$.
Indeed, note that if we define $\gamma_j=\alpha_{r_i}$ for all $j \in [r_i, r_{i+1})$
then
\begin{equation}\label{eq:test3}
 \sum_{i}(r_{i+1}-r_i)\alpha_{r_i}= \sum_j \gamma_j;
\end{equation}
similarly
if $\gamma^\prime_j=\alpha_{n_i}$ for all $j \in [n_i, n_{i+1})$
then
\[
 \sum_{i}(n_{i+1}-n_i)\alpha_{n_i}= \sum_j \gamma^\prime_j.
\]
Let us fix $j \in \mathbb N$ and suppose that $j \in [r_i, r_{i+1}) \cap
[n_l, n_{l+1})$, then $n_l < r_{i+1}$ whence equation \eqref{eq:test2} implies that
\[
 \gamma^\prime_j=\alpha_{n_l} \ge \alpha_{r_i}=\gamma_j.
\]
Thus $\gamma^\prime_j \ge \gamma_j$ for all $j \in \mathbb N$.

$(2) \Longrightarrow (1)$. It is straightforward.

$(2) \Longrightarrow (3)$. Since $\alpha_n>0$ for all $n \in \N$,
let us define $\bar n_i=r_i$ and
let $\{\gamma_i\}$ as before. The sequence $\{\alpha_{\bar n_i}\}$ is clearly nonincreasing.
Using equation~\eqref{eq:test3}, we just need to prove that $\gamma_n=\bar\alpha_n$ for all $n$.
Indeed, if $n \in [\bar n_i, \bar n_{i+1})$ then
\[
 \gamma_n=\alpha_{r_i}=\alpha_{\bar n_i} \le \alpha_j
\]
for all $j < r_{i+1}=\bar n_{i+1}$.
Hence, $\gamma_n=\alpha_{\bar n_i}=\min\{\alpha_j\colon j \le n\}=\bar \alpha_n$.

$(3) \Longrightarrow (1)$. It is straightforward.

\end{proof}

\end{proof}

\section*{Acknowledgments}
The authors are grateful to NUMEC for the logistic support and to Capes-PROEX
and Fapesp (09/52379-8) for the
financial support during the visits of Fabio Zucca and Daniela Bertacchi to the Universidade de S\~ao Paulo.

\end{document}